     \newcommand{\BF}{{\mathbb {F}}}
     \newcommand{\BR}{{\mathbb {R}}}
     \newcommand{\BZ}{{\mathbb {Z}}}
     \newcommand{\CJ}{{\mathcal {J}}}
     \newcommand{\CL}{{\mathcal {L}}}
     \newcommand{\CN}{{\mathcal {N}}}
\def\-{^{-1}}
\newtheorem{Theorem}{Theorem}[section]
\newtheorem{Lemma}[Theorem]{Lemma}
\newtheorem{Remark}[Theorem]{Remark}
\newcommand{\sumstar}{\sideset{}{^\star}\sum}
\begin{document}

\title{The Burgess bound via a trivial delta method}

\author[K. Aggarwal, R. Holowinsky, Y. Lin, and Q. Sun]{Keshav Aggarwal, Roman Holowinsky, Yongxiao Lin, and Qingfeng Sun}


\address{Department of Mathematics, The Ohio State University\\ 231 W 18th Avenue\\
Columbus, Ohio 43210-1174}
\email{aggarwal.78@buckeyemail.osu.edu}
\email{holowinsky.1@osu.edu}
\email{lin.1765@buckeyemail.osu.edu}

\address{School of Mathematics and Statistics
\\ Shandong University, Weihai \\ Weihai \\Shandong 264209 \\China}
\email{qfsun@sdu.edu.cn}

\begin{abstract}
Let $g$ be a fixed Hecke cusp form for $\rm SL(2,\BZ)$ and $\chi$ be a primitive Dirichlet character of conductor $M$. The best known subconvex bound for $L(1/2,g\otimes \chi)$ is of Burgess strength. The bound was proved by a couple of methods: shifted convolution sums and the Petersson/Kuznetsov formula analysis. It is natural to ask what inputs are really needed to prove a Burgess-type bound on $\rm GL(2)$. In this paper, we give a new proof of the Burgess-type bounds ${L(1/2,g\otimes \chi)\ll_{g,\varepsilon} M^{1/2-1/8+\varepsilon}}$ and $L(1/2,\chi)\ll_{\varepsilon} M^{1/4-1/16+\varepsilon}$ that does not require the basic tools of the previous proofs and instead uses a trivial delta method.
\end{abstract}

\subjclass[2010]{11F66}

\keywords{subconvexity, Dirichlet characters, Hecke cusp forms, {$L$}-functions}

\maketitle
\section{Introduction and statement of results}
Let $g$ be a fixed Hecke cusp form on $\rm GL(2)$ and let $\chi$ be a primitive Dirichlet character modulo $M$. Subconvex bounds for the twisted $L$-functions
\begin{equation*}
\begin{split}
L(s,g\otimes \chi)=\sum_{n=1}^{\infty}\frac{\lambda_g(n)\chi(n)}{n^s}
\end{split}\end{equation*}
have been of interest for a while, with lots of applications. The strongest known bound is of Burgess quality (in special cases better results than Burgess-type exponent are known). The first subconvex bound in the conductor aspect was established by Burgess \cite{Burgess} for Dirichlet $L$-functions $L(s,\chi)$. For a primitive Dirichlet character $\chi$ modulo $M$, Burgess proved
\begin{equation}\label{Burgess1}
\begin{split}
L\left(\frac{1}{2},\chi\right)\ll_{\varepsilon} M^{1/4-1/16+\varepsilon}.
\end{split}\end{equation}
A subconvex bound of such strength is called a Burgess bound. In the $\rm GL(2)$ setting, the Burgess bound for an $L$-function of a Hecke cusp form $g$ twisted by a primitive Dirichlet character $\chi$ of large conductor $M$  is
\begin{equation}\label{Burgess2}
\begin{split}
L\left(\frac{1}{2},g\otimes \chi\right)\ll_{g,\varepsilon} M^{1/2-1/8+\varepsilon}.
\end{split}
\end{equation}
The first subconvex bound for $L\left(1/2,g\otimes \chi\right)$ was obtained by Duke, Friedlander, and Iwaniec \cite{DFI}, who studied an amplified second moment and reduced the problem to treating shifted convolution sums of the form \begin{equation*}\sum_{\ell_1m-\ell_2n=h}\overline{\lambda_g(m)}\,\lambda_g(n),
\end{equation*}
for which they invented the $\delta$-symbol method in their name to deal with. Their approach gives \begin{equation*}L\left(\frac{1}{2},g\otimes \chi\right)\ll_{g,\varepsilon} M^{1/2-1/22+\varepsilon},
\end{equation*}
where $g$ is a holomorphic cusp form on the full modular group. Bykovski\u\i \cite{Bykovskii}, who embedded the $L$-function $L\left(1/2,g\otimes \chi\right)$ in question into a larger family $\sum_{g^\prime \in S(M)}\left|L(1/2,g^\prime\otimes \chi)\right|^2$ (and with amplification) and treated the later by applying the Petersson formula, where $S(M)$ is an orthogonal basis of the space of holomorphic cusp forms of level $M$, was able to sharpen the bound of Duke, Friedlander, and Iwaniec to the Burgess quality \eqref{Burgess2}. Bykovski\u\i's approach has the advantage of avoiding the treatment of shifted convolution sums and is regarded as the simplest subconvex proof for this case for a while. For more general cusp forms $g$ the bound \eqref{Burgess2} was established by Blomer, Harcos, and Michel \cite{Blo-Har-Mic} under the Ramanujan conjecture, and subsequently by Blomer and Harcos \cite{Blomer-Harcos} unconditionally. This latter bound was generalized by Fouvry, Kowalski, and Michel \cite{FKM} to more general algebraic twists including the Dirichlet characters $\chi$. 

It is then natural to ask: what inputs are really needed to prove a Burgess-type bound on $\rm GL(2)$? Is the shifted convolution sum treatment really needed? Is there anything special about the Petersson formula analysis? Or is subconvexity, even with the strongest known exponent, a soft feature? In asking these questions, we were able to find a proof that requires none of the tools required by the previous works. We came across this argument through a careful analysis of Munshi's $\rm GL(2)$ $\delta$-symbol proof in \cite{Munshi17}.

In a series of papers \cite{Mun1, Mun2}, Munshi introduced a novel $\rm GL(2)$ Petersson $\delta$-symbol method to prove level aspect subconvex bounds for $\rm GL(3)$ $L$-functions. In a recent paper \cite{Munshi17}, he demonstrated that the $\rm GL(2)$ $\delta$-symbol method can also be applied to the classical setting of Dirichlet $L$-functions as well as $L(s,g\otimes \chi)$ and re-established the bounds \eqref{Burgess1} and \eqref{Burgess2} simultaneously. 

While studying the works of Munshi, Holowinsky and Nelson \cite{HN17} discovered the following key identity hidden within Munshi's proof \cite{Mun2},
\begin{equation*}
\begin{split}
\chi(n)=\frac{M}{Rg_{\bar{\chi}}}\sum_{r=1}^{\infty}\chi(r)e\left(\frac{n\bar{r}}{M}\right)V\left(\frac{r}{R}\right)-\frac{1}{g_{\bar{\chi}}}\sum_{r\neq 0}S_{\chi}(r,n;M)\widehat{V}\left(\frac{r}{M/R}\right).
\end{split}
\end{equation*}
Here $g_\chi$ is the Gauss sum, $S_{\chi}(r,n;M)$ is the generalized Kloosterman sum, $\widehat{V}$ is the Fourier transform of the Schwartz
function $V$ which is supported on $[1,2]$ and is normalized such that $\widehat{V}(0)=1$, and $R>0$ is a parameter. This allowed them to produce a method which removed the use of the $\rm GL(2)$ $\delta$-symbol and establish a stronger subconvex bound. Subsequently, Lin \cite{Lin} was able to generalize the identity in the application to the subconvexity problem in both the Dirichlet character twist and $t$-aspect case via the identity 
\begin{equation*}
\begin{split}
\chi(n)n^{-it}V_A\left(\frac{n}{N}\right)=&\, O\left(t^{1/2-A}\right)+\bigg(\frac{2\pi}{Mt}\bigg)^{it} e\bigg(\frac{t}{2\pi}\bigg)\frac{M^{2}t^{3/2}}{Ng_{\bar{\chi}}}\sum_{r=1}^{\infty}\chi(r)e\bigg(\frac{n\bar{r}}{M}\bigg)r^{-it}e\bigg(-\frac{n}{Mr}\bigg)V\left(\frac{r}{N/Mt}\right)\\
-&\bigg(\frac{2\pi}{N}\bigg)^{it}e\bigg(\frac{t}{2\pi}\bigg) \frac{t^{1/2}}{g_{\bar{\chi}}}\sum_{r\neq 0}S_{\chi}(r,n;M)\int_{\mathbb{R}}V(x)x^{-it}e\bigg(-\frac{nt}{Nx}\bigg)e\bigg(-\frac{rNx}{M^2t}\bigg)\,\mathrm{d}x.
\end{split}
\end{equation*}
Here $n\asymp N$, $A\geq 1$ is any integer, $V_A(x)$ is a smooth compactly supported function with bounded derivatives.
With this approach, Lin \cite{Lin} obtained the following bound
\begin{equation*}
\begin{split}
L\left(\frac{1}{2}+it,\pi\otimes \chi \right)\ll_{\pi, \varepsilon}&(M(|t|+1))^{3/4-1/36+\varepsilon},
\end{split}
\end{equation*}
for $\pi$ being a fixed Hecke--Maass cusp form for $\mathrm{SL}(3,\mathbb{Z})$.

In this paper, we demonstrate that one is again able to remove the $\rm GL(2)$ $\delta$-symbol method and replace it in our subconvexity problem by the following trivial key identity,
\begin{equation*}
\delta(n=0)=\frac{1}{q}\sum_{c|q}\underset{\begin{subarray}{c}
a\bmod c\\ (a,c)=1 \end{subarray}}{\sum}e\left(\frac{an}{c}\right), \quad \mbox{when}\quad  q>|n|,
\end{equation*}
where $\delta(n=0)$ denotes the Kronecker delta symbol. We shall establish the following bounds.
\begin{Theorem}\label{cusp form thm}
Let $g$ be a fixed Hecke cusp form for $\rm SL(2,\mathbb{Z})$ and $\chi$ be a primitive Dirichlet character modulo a prime $M$. For any $\varepsilon>0$, 
\begin{equation*}
\begin{split}
L\left(\frac{1}{2},g\otimes \chi\right)\ll_{g,\varepsilon} M^{1/2-1/8+\varepsilon}.
\end{split}\end{equation*}
\end{Theorem}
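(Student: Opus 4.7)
The plan is as follows. By a standard approximate functional equation, the theorem reduces to establishing
\[
S(N) := \sum_n \lambda_g(n)\chi(n) V(n/N) \ll M^{1-1/8+\varepsilon}
\]
for $V$ a smooth compactly supported weight and $N$ in the range $1\le N \le M^{1+\varepsilon}$. Convexity already dispenses with $N \ll M^{3/4}$, so the essential case is $N \asymp M$, where one must save a factor of $M^{1/8}$ over the trivial bound.

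The first substantive step is conductor lowering by a dummy variable. I would insert a smooth weight $U(m/N)$ in a new variable $m \asymp N$ together with the detector $\delta(n=m)$, writing
\[
S(N) \ \sim \ \frac{1}{N}\sum_{n,m} \lambda_g(n)\chi(m) V(n/N) U(m/N) \delta(n-m = 0).
\]
Since $|n-m| \ll N$, one fixes a parameter $q > 2N$ and invokes the trivial delta identity from the introduction to rewrite $\delta(n-m=0)$ as $\tfrac{1}{q}\sum_{c\mid q}\sumstar_{a \bmod c} e(a(n-m)/c)$, which separates variables into an additively twisted GL(2) sum $\sum_n \lambda_g(n) e(an/c) V(n/N)$ and an additively twisted character sum $\sum_m \chi(m) e(-am/c) U(m/N)$. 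Because $M$ is prime, the combined character $\chi(\cdot)e(-a\cdot/c)$ has conductor dividing $cM$, with its size governed solely by whether $M\mid c$.

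Next, I would apply GL(2) Voronoi summation to the inner $n$-sum, producing a dual sum over $n'$ of length $c^2/N$ with Kloosterman-sum weights, together with additively twisted Poisson summation on the inner $m$-sum, producing a dual sum over $m'$ of length $cM/N$ with Gauss-sum weights. To remove the GL(2) coefficients one then applies Cauchy--Schwarz in the $n'$ variable, invoking the Rankin--Selberg bound $\sum_{n' \ll X}|\lambda_g(n')|^2 \ll X^{1+\varepsilon}$. Opening the resulting square and applying Poisson summation in one further variable (either the remaining $m'$ of one of the copies or an auxiliary smooth variable introduced for the Cauchy--Schwarz step) reduces matters to a complete exponential sum modulo $M$ of Kloosterman type.

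The main obstacle I expect is the algebraic evaluation of this final exponential sum: one must untangle the nested Kloosterman fractions emerging from Voronoi and Poisson, exhibit the underlying Kloosterman-type character sum, and extract square-root cancellation via the Weil bound, while handling the degenerate locus (where the sum fails to be generic) separately and showing it contributes negligibly. Once this is in hand, choosing the parameter $q$ so as to balance diagonal versus off-diagonal contributions should yield exactly the saving of $M^{1/8}$ claimed in \eqref{Burgess2}.
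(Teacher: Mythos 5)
Your outline reproduces the broad template of the paper (approximate functional equation, trivial delta detection, Voronoi on the $\mathrm{GL}(2)$ sum and Poisson on the character sum, Cauchy--Schwarz to remove $\lambda_g$, one more Poisson, Weil-type bounds for the resulting complete character sums), but it omits the two auxiliary averages that are the actual engine of the proof, and without them the method does not beat the trivial bound $S(N)\ll N^{1+\varepsilon}$. First, the paper does not detect $\delta(n=m)$ for a dummy copy $m$ of $n$; it uses the Hecke relation $\lambda_g(r\ell)=\lambda_g(r)\lambda_g(\ell)-\delta_{\ell\mid r}\lambda_g(r/\ell)$ to write
\begin{equation*}
S(N)=\frac{1}{L^\star}\sum_{\ell\in\mathcal{L}}\overline{\lambda_g(\ell)}\sum_{n}\lambda_g(n)W\!\left(\frac{n}{N\ell}\right)\sum_{r}\chi(r)V\!\left(\frac{r}{N}\right)\delta(n=r\ell)+O\!\left(\frac{N^{1+\varepsilon}}{L}\right),
\end{equation*}
with $\ell$ running over primes in $[L,2L]$. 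This factorization is not mere separation of variables: the $n$-sum has length $NL$ against the $r$-sum of length $N$, and the $\ell$-average supplies a bilinear structure whose diagonal after Cauchy--Schwarz is smaller by $L^{1/2}$ (this is exactly the term $M^{1/2}/L^{1/2}$ in the final optimization; with $L=1$ it equals the trivial bound). Second, the paper does not fix a single modulus $q>2N$; it averages the trivial delta identity over moduli $q=pM$ with $p$ prime in $[P,2P]$. With a single $q=pM$ there are only four divisors $c$, the term $c=pM$ dominates, and the dual $r$-sum after Poisson has length $pM/N=O(1)$ when $N\asymp M$ and $p$ is bounded, so there is no variable left with enough length for the final Poisson step to produce off-diagonal cancellation. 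The Burgess exponent arises precisely from optimizing these two families, $P=M^{1/4+\varepsilon}$ and $L=P^{1-\varepsilon}$, in the balance $M^{1/2}/L^{1/2}+L^{1/2}M^{1/4}+\cdots$; your proposal has no parameters playing these roles, so the claimed saving of $M^{1/8}$ cannot be extracted from the steps you describe.

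A smaller but related imprecision: you say the final exponential sum is ``of Kloosterman type'' and handled by the Weil bound. In the paper the relevant sums are $\sumstar_{\alpha\bmod M}\overline{\chi}(r_1+\alpha)\chi\bigl(r_2+\ell_2p_1(\overline{\overline{\alpha}\ell_1p_2+n})\bigr)$, whose square-root cancellation is obtained from the Adolphson--Sperber nondegeneracy criterion after a Fourier expansion of $\chi$, and whose degenerate loci ($M\mid n$, or $r_1-\overline{n}\ell_1p_2\equiv r_2+\overline{n}\ell_2p_1\equiv 0$) must be counted against the congruence $-\overline{r_1}\ell_1p_2+\overline{r_2}\ell_2p_1+n\equiv 0\bmod{p_1p_2}$ coming from the $p$-average. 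That counting is where the conditions \eqref{restriction 1}, \eqref{c=p}, \eqref{restriction 3} and \eqref{restriction 4} enter, and none of it can even be set up without the $\ell$- and $p$-families.
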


\begin{Theorem}\label{divisor thm}
Let $\chi$ be a primitive Dirichlet character modulo a prime $M$. For any $\varepsilon>0$,
\begin{equation*}
\begin{split}
L\left(\frac{1}{2},\chi\right)\ll_{\varepsilon} M^{1/4-1/16+\varepsilon}.
\end{split}\end{equation*}
\end{Theorem}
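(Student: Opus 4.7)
The plan is to square $L(1/2,\chi)$ and reduce Theorem~\ref{divisor thm} to the same style of exponential-sum bound obtained for Theorem~\ref{cusp form thm}, with the divisor function $\tau(n)$ replacing $\lambda_g(n)$ and with $\GL(2)$ Voronoi for the Eisenstein series replacing Voronoi for the cusp form $g$. Applying the approximate functional equation to $L(s,\chi)^2 = \sum_{n\geq 1} \tau(n)\chi(n) n^{-s}$ and splitting dyadically, it suffices to prove
$$S(N) := \sum_{n\geq 1} \tau(n)\chi(n) V(n/N) \ll N\, M^{-1/8+\varepsilon}$$
uniformly for $N\leq M^{1+\varepsilon}$ and every smooth $V$ supported on $[1,2]$. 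Summing the dyadic pieces yields $L(1/2,\chi)^2\ll M^{3/8+\varepsilon}$, which is the claimed bound $L(1/2,\chi)\ll M^{1/4-1/16+\varepsilon}$. The critical range is $N\asymp M$, where one must save a factor of $M^{1/8}$ over the trivial estimate $S(N)\ll N^{1+\varepsilon}$.

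To bound $S(N)$, we first separate the two variables by inserting a dummy $m\asymp N$ and writing
$$S(N) = \sum_{m,n\geq 1} \tau(n)\,\chi(m)\, V(n/N)\, W(m/N)\, \delta(m=n),$$
where $W$ is a smooth bump with $W\equiv 1$ on $[1,2]$ and $\mathrm{supp}\,W\subset[1/2,3]$. For an integer $q$ slightly exceeding $2N$, chosen with a prescribed factorization so that its divisors cluster near a single scale $C$ to be optimized later, the trivial delta identity
$$\delta(m=n) = \frac{1}{q}\sum_{c\mid q}\;\sideset{}{^*}\sum_{a\bmod c} e\!\left(\frac{a(m-n)}{c}\right)$$
splits $S(N)$ according to $c\mid q$. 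The $c=1$ piece is the ``diagonal'' and is trivial; the main contribution concentrates at $c\asymp C$.

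For $c\asymp C$, we dualize both inner sums. $\GL(2)$ Voronoi for the Eisenstein series converts $\sum_n \tau(n)\, e(-an/c)\, V(n/N)$ into a dual $\tau$-sum of length $\sim c^2/N$, while Poisson summation modulo $cM$ converts $\sum_m \chi(m)\, e(am/c)\, W(m/N)$ into a dual $\bar\chi$-sum of length $\sim cM/N$, with the character folded into a Gauss-sum factor. The restricted sum over $a\bmod c$ then evaluates as a Ramanujan-type sum. Next we apply Cauchy--Schwarz in the dual $n$-variable to absorb $\tau$, open the square, and apply Poisson one more time to reduce the inner character sum modulo $M$ to a complete Kloosterman sum, bounded by Deligne's $\ll M^{1/2+\varepsilon}$. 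Balancing $C$ against the dual lengths $c^2/N$ and $cM/N$ produces the saving $M^{1/8-\varepsilon}$ in $S(N)$. The main obstacle, as in the cusp-form case, is ensuring that the Cauchy--Schwarz step does not leak any saving; the extra $\tau^2$-moment appearing on the divisor side (absent in Theorem~\ref{cusp form thm}) is harmless because $\sum_{n\leq M}\tau(n)^2\ll M^{1+\varepsilon}$.
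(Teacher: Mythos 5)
Your first paragraph is correct and is essentially what the paper does: for $\chi$ a Dirichlet character modulo a prime $M$, one has $L(1/2,\chi)^2 = \sum_{n\ge 1}\tau(n)\chi(n)n^{-1/2}$ (up to the approximate functional equation), and the paper runs the identical machinery with $\lambda_g$ replaced by $\tau$ (this is why the Voronoi lemma, \S\ref{Voronoi and Poisson}, and the $\mathcal{I}_{g=\tau}$, $\mathcal{M}_{g=\tau}$ terms all carry the divisor-function case alongside the cusp-form case). The final bound $S(N)\ll M^{\varepsilon}\left(M^{3/4}+N^{1/2}M^{3/8}\right)$ then gives $L(1/2,\chi)^2\ll M^{3/8+\varepsilon}$, i.e.\ $L(1/2,\chi)\ll M^{3/16+\varepsilon}=M^{1/4-1/16+\varepsilon}$. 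So far, so good.

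The problem is in your sketch of how to prove $S(N)\ll NM^{-1/8+\varepsilon}$. You insert a dummy variable $m$ and apply the trivial delta with a modulus $q$ ``slightly exceeding $2N$, with a prescribed factorization so its divisors cluster near a single scale $C$.'' This is not the paper's set-up, and as described it cannot produce a subconvex saving. Two issues. First, in the identity $\delta(n=0)=\frac{1}{q}\sum_{c|q}\sideset{}{^\star}\sum_{a\bmod c}e(an/c)$, the weight on $c$ is $\varphi(c)/q$ and $\sum_{c\mid q}\varphi(c)=q$, so the $c=q$ divisor always dominates; you cannot engineer $q$ so that the mass sits at a smaller scale $C\ll q$. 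Second, and more fundamentally, you have dropped the amplification over primes $\ell\in\mathcal{L}$. The paper does not just detect $m=n$: it detects $n=r\ell$ with $\ell$ running over primes in $[L,2L]$ (using Hecke multiplicativity of $\lambda_g$, or of $\tau$), which introduces the free parameter $L$. Looking at the paper's final display
\[
\frac{S(N)}{N^{1/2}}\ll (PML)^{\varepsilon}\left[\frac{N^{1/2}L}{PM^{1/2}}+\frac{M}{PN^{1/2}}+\frac{M^{1/2}}{L^{1/2}}+\frac{PLM^{\gamma}}{N^{1/2}}+\frac{M^{3/4}}{N^{1/2}}+L^{1/2}M^{1/4}\right],
\]
the only way to beat the convexity exponent $M^{1/2}$ is to take $L\gg 1$ and balance $M^{1/2}/L^{1/2}$ against $L^{1/2}M^{1/4}$, which forces $L\asymp M^{1/4}$ and gives the Burgess exponent $M^{3/8}$. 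Without the $\ell$-average your ``trivial delta'' argument saturates at convexity. Relatedly, the modulus the paper actually uses is $q=pM$ with $p$ averaged over primes in $[P,2P]$: having $M\mid q$ is what makes the Poisson step on the $\chi$-sum pair cleanly with the $\alpha$-average and produce the Gauss sum. Finally, the character sum that arises after the second Poisson (the paper's $\mathfrak{C}$ in \eqref{character sum}) is not a complete Kloosterman sum; it is a three-variable complete exponential sum whose square-root cancellation requires the Adolphson--Sperber nondegeneracy criterion (Lemma \ref{squareroot 2}), not the classical Weil bound. You should reinstate the $\ell$-amplification, take $q=pM$ with $p\in\mathcal{P}$, and cite the correct character-sum input.
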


Our approach is more direct. Neither the treatment of shifted convolution sums nor the use of the Petersson/Kuznetsov formula, is needed. We should point out that in Burgess' proof in \cite{Burgess} he appealed to estimates from algebraic geometry, which follow from Weil's proof of the Riemann hypothesis for curves over finite fields. In our case, the algebraic geometry input that we use is the work \cite{Adolphson-Sperber} of Adolphson and Sperber who proved their results in light of Deligne's general results. We also remark that better bounds were known for $\chi$ quadratic: Conrey and Iwaniec \cite{Conrey-Iwaniec} proved that the Weyl-type bound $L\left(\frac{1}{2},\chi\right)\ll_{\varepsilon} M^{1/6+\varepsilon}$ holds. This bound was recently extended to \emph{any} character $\chi$ by Petrow and Young \cite{Petrow-Young1, Petrow-Young2}. 

\section{Some Notations and Lemmas}

For a smooth function $V$ with bounded derivatives, we define its Fourier transform
\begin{equation*}
\widehat{V}(x) = \int_\BR V(u)e(-xu)\mathrm{d}u.
\end{equation*}
Repeated integration by parts shows 
\begin{equation}\label{Vcheck}
\widehat{V}(x)\ll_A (1+|x|)^{-A},
\end{equation}
for any $A\geq 0$.

Next, we collect some lemmas that we will use for the proof.

\begin{Lemma}[Trivial delta method]One has
\begin{equation}\label{trivial delta}
\delta(n\equiv m\bmod q)=\frac{1}{q}\sum_{c|q}\underset{\begin{subarray}{c}
a\bmod c \end{subarray}}{\sumstar}e\left(\frac{a(n-m)}{c}\right).
\end{equation}
where the star over the inner sum denotes the sum is over $(a,c)=1$.
\end{Lemma}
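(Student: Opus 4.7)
The plan is to reduce to the case $m=0$ by substituting $k=n-m$, so that the claim becomes
\begin{equation*}
\delta(q\mid k)=\frac{1}{q}\sum_{c\mid q}\underset{\begin{subarray}{c}a\bmod c\end{subarray}}{\sumstar}e\left(\frac{ak}{c}\right).
\end{equation*}
The right-hand side should be recognized as the sum of Ramanujan sums $\sum_{c\mid q}c_c(k)$ divided by $q$, so the identity is really a restatement of the classical evaluation $\sum_{c\mid q}c_c(k)=q\,\delta(q\mid k)$.

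To establish this, I would start from the orthogonality of additive characters,
\begin{equation*}
\sum_{a\bmod q}e\left(\frac{ak}{q}\right)=q\,\delta(q\mid k),
\end{equation*}
and decompose the complete residue sum on the left by grouping residues according to their greatest common divisor with $q$. For each divisor $d\mid q$, the $a\bmod q$ with $(a,q)=d$ are exactly the integers of the form $a=da'$ with $a'$ ranging over reduced residues modulo $q/d$. Since $e(ak/q)=e(a'k/(q/d))$, this gives
\begin{equation*}
\sum_{a\bmod q}e\left(\frac{ak}{q}\right)=\sum_{d\mid q}\underset{\begin{subarray}{c}a'\bmod q/d\end{subarray}}{\sumstar}e\left(\frac{a'k}{q/d}\right).
\end{equation*}
Reindexing by $c=q/d$ (which also runs over all divisors of $q$) and comparing the two expressions completes the proof.

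There is no real obstacle here: the identity is purely elementary, and the only minor point to verify is that the reparametrization $a\mapsto(d,a')$ is a bijection between $(\BZ/q\BZ)$ and pairs $(d,a')$ with $d\mid q$ and $a'\in(\BZ/(q/d)\BZ)^\times$, which is standard. The usage in the paper (noted in the introduction as the case $q>|n-m|$) is the immediate corollary that $\delta(n=m)$ can be expanded as $q^{-1}\sum_{c\mid q}\sumstar_{a\bmod c}e(a(n-m)/c)$ whenever $q$ exceeds $|n-m|$.
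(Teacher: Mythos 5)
Your proof is correct: the paper states this lemma without proof (it is the classical identity $\sum_{c\mid q}c_c(k)=q\,\delta(q\mid k)$ for Ramanujan sums), and your derivation via orthogonality of additive characters and partitioning residues $a\bmod q$ by $d=(a,q)$ is exactly the standard argument the authors have in mind. The reindexing $c=q/d$ and the final remark about the case $q>|n-m|$ are both handled correctly.
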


\begin{Lemma}[Voronoi summation formula, {\cite[Theorem A.4]{KMV2002}}]\label{voronoi}
Let $g$ be a Hecke cusp form of level $1$ with Fourier coefficients $\lambda_g(n)$. Let $c\in \mathbb{N}$ and $a\in \mathbb{Z}$ be such that $(a,c)=1$ and let $W$ be a smooth compactly supported function. For $N>0$,
\begin{equation*}\label{voronoi for tau}
\begin{split}
\sum_{n=1}^{\infty}\lambda_g(n)e\left(\frac{an}{c}\right)W\left(\frac{n}{N}\right)
=\mathcal{I}(g;W,c,N) + \frac{N}{c} \sum_{\pm}\sum_{n=1}^{\infty}\lambda_g(n)e\left(\mp\frac{\overline{a}n}{c}\right)\widecheck{W}_g^{\pm}\left(\frac{nN}{c^2}\right),
\end{split}
\end{equation*}
where 
\begin{equation*}
\mathcal{I}(g;W,c,N)=
\begin{cases}
\frac{N}{c}\int_0^\infty (\log xN+2\gamma-2\log c)W(x)\mathrm{d}x \quad &\textit{ if } \lambda_g \textit{ is the divisor function } \tau,\\
\qquad 0 \quad &\textit{ otherwise}.
\end{cases}
\end{equation*}
Here $\gamma$ is the Euler's constant. $\widecheck{W}_g^\pm$ is an integral transform of $W$ given by the following.
\begin{enumerate}
\item If $g$ is holomorphic of weight $k$, then
\begin{equation*}
\widecheck{W}^+_g(y) = \int_0^\infty W(x)2\pi i^k J_{k-1}(4\pi\sqrt{yx})\mathrm{d}x,
\end{equation*} 
and $\widecheck{W}^-_g=0$.

\item If $g$ is a Maass form with $(\Delta+\lambda)g=0$ and $\lambda=1/4+r^2$, and $\varepsilon_g$ is an eigenvalue under the reflection operator,
\begin{equation*}
\widecheck{W}^+_g(y) = \int_0^\infty \frac{-\pi W(x)}{\sin\pi ir} (J_{2ir}(4\pi\sqrt{yx}) - J_{-2ir}(4\pi\sqrt{yx})) \mathrm{d}x,
\end{equation*}
and
\begin{equation*}
\widecheck{W}^-_g(y) = \int_0^\infty 4\varepsilon_g\cosh(\pi r)W(x)K_{2ir}(4\pi\sqrt{yx}) \mathrm{d}x.
\end{equation*}
If $r=0$,
\begin{equation*}
\widecheck{W}^+_g(y) = \int_0^\infty -2\pi W(x) Y_0(4\pi\sqrt{yx}) \mathrm{d}x
\quad
\text{ and } 
\quad
\widecheck{W}^-_g(y) = \int_0^\infty 4\varepsilon_g W(x) K_0(4\pi\sqrt{yx}) \mathrm{d}x.
\end{equation*}

\item When $\lambda_g(n)=\tau(n)$ is the divisor function, 
\begin{equation*}
\widecheck{W}^+_g(y) = \int_0^\infty -2\pi Y_0(4\pi\sqrt{xy})\mathrm{d}x \quad \text{ and } \quad \widecheck{W}^-_g(y) = \int_0^\infty 4K_0(4\pi\sqrt{yx})\mathrm{d}x.
\end{equation*}
\end{enumerate}
\end{Lemma}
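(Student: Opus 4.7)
The plan is to derive the Voronoi formula from Mellin inversion applied to the functional equation of the additively twisted Dirichlet series
\[
L(s, g \otimes \psi_{a/c}) := \sum_{n \geq 1} \lambda_g(n) e(an/c)\, n^{-s}, \qquad \Re s > 1,
\]
where $\psi_{a/c}(n) := e(an/c)$. Since $W$ is smooth and compactly supported in $(0,\infty)$, its Mellin transform $\widetilde W(s) = \int_0^\infty W(x) x^{s-1}\,\mathrm{d}x$ is entire and of rapid decay on vertical lines, and Mellin inversion $W(x) = \frac{1}{2\pi i}\int_{(2)} \widetilde W(s) x^{-s}\,\mathrm{d}s$ rewrites the left-hand side of the Voronoi identity as
\[
\frac{1}{2\pi i}\int_{(2)} \widetilde W(s) N^s\, L(s, g \otimes \psi_{a/c})\,\mathrm{d}s.
\]

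The key input is a functional equation of the shape
\[
L(s, g \otimes \psi_{a/c}) = c^{1-2s} \sum_{\pm} \Phi_g^{\pm}(s)\, L(1-s, g \otimes \psi_{\mp \overline a/c}),
\]
with $\Phi_g^\pm(s)$ explicit Gamma-factor kernels depending on the archimedean type of $g$. For holomorphic $g$ of weight $k$ this follows from $g|_k \gamma = g$ applied to $\gamma = \left(\begin{smallmatrix} \overline a & * \\ c & -a \end{smallmatrix}\right) \in SL_2(\BZ)$ with $a\overline a \equiv 1\pmod c$, which transforms the theta series $\sum_n \lambda_g(n) e(an/c) e^{-2\pi ny/c}$ into its dual and yields the functional equation after Mellin inversion in $y$ (here $\Phi_g^-$ vanishes, producing the single $J_{k-1}$-kernel). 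For Maass forms one argues analogously via the Whittaker expansion at the cusp $-\overline a/c$, with $\varepsilon_g$ determining the relative sign of the two parities. For the divisor function one reduces instead to the functional equation of the Estermann zeta function $\sum_n \tau(n) e(an/c) n^{-s}$, which follows from that of $\zeta(s)^2$.

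Now shift the contour from $\Re s = 2$ to $\Re s = -1$. For cusp forms no pole is crossed; for $\lambda_g = \tau$ the Estermann zeta function has a double pole at $s = 1$, and a short residue computation using the Laurent expansions $\zeta(s) = (s-1)^{-1} + \gamma + O(s-1)$, $c^{1-2s} = c^{-1}(1 - 2(\log c)(s-1) + \cdots)$, and $\widetilde W(s) N^s = N\widetilde W(1) + N\int_0^\infty W(x)\log(xN)\,\mathrm{d}x\cdot(s-1) + \cdots$ produces exactly $\mathcal I(g;W,c,N) = (N/c)\int_0^\infty (\log xN + 2\gamma - 2\log c) W(x)\,\mathrm{d}x$. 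On the shifted line, substitute the functional equation, expand the dual $L$-function $L(1-s, g\otimes\psi_{\mp\overline a/c}) = \sum_n \lambda_g(n) e(\mp \overline a n/c) n^{s-1}$ (now absolutely convergent), and interchange sum with integral to obtain
\[
\frac{N}{c}\sum_\pm \sum_{n=1}^\infty \lambda_g(n) e(\mp \overline a n/c)\, \widehat W_g^\pm\!\left(\frac{nN}{c^2}\right), \qquad \widehat W_g^\pm(y) := \frac{1}{2\pi i}\int_{(-1)} \widetilde W(s)\, \Phi_g^\pm(s)\, y^{s-1}\,\mathrm{d}s.
\]
The Gauss--Legendre duplication and Euler reflection formulas applied to the Gamma ratios in $\Phi_g^\pm$ identify these Mellin--Barnes integrals with the classical Mellin representations of $J_{k-1}$, $J_{\pm 2ir}$, $K_{2ir}$, $Y_0$, and $K_0$, producing the stated Bessel transforms.

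The decay $\widehat W_g^\pm(x) \ll_A (1+|x|)^{-A}$ follows by repeated integration by parts in the Bessel integrals, using smoothness and compact support of $W$ together with the oscillation (respectively exponential decay) of the $J,Y$ and $K$ kernels. The main obstacle in a fully self-contained execution is the derivation of the functional equation in the Maass case --- tracking both parities via the reflection operator and correctly identifying $\Phi_g^\pm$ with the Gamma ratios that unfold to $J_{\pm 2ir}$ and $K_{2ir}$ under Mellin--Barnes inversion --- a classical computation carried out in detail in \cite[Theorem A.4]{KMV2002}.
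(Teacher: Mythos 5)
The paper does not prove this lemma itself; it simply cites it as \cite[Theorem A.4]{KMV2002}. Your sketch --- Mellin inversion against the functional equation of the additively twisted $L$-series, the double-pole residue at $s=1$ producing $\mathcal{I}(g;W,c,N)$ in the divisor case, and Mellin--Barnes identification of the $\Phi_g^\pm(s)$ kernels with the stated Bessel transforms --- is a correct outline of precisely the argument given in that reference.
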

In each case, we have
\begin{equation}\label{What}
\widecheck{W}_g^\pm(x)\ll_A (1+|x|)^{-A},
\end{equation}
for any $A\geq 0$.

Very often we will use the following bound when $g$ is a Maass form with Fourier coefficients $\lambda_g(n)$.
\begin{Lemma}[Ramanujan bound on average] \label{Ramanujan}
Let $W$ be a smooth function with compact support contained in $\mathbb{R}_{>0}$, satisfying $W^{(j)}(x)\ll_j 1$. Then
\begin{equation}\label{Rankin--Selberg}
\sum_{n=1}^{\infty}\left|\lambda_g(n)\right|W\left(\frac{n}{X}\right)\ll_{g} X.
\end{equation}
\end{Lemma}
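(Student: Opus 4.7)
The plan is to deduce the bound from the Rankin--Selberg convolution $L(s, g \otimes \bar g)$ together with a single application of Cauchy--Schwarz. First I would smoothly bound
\begin{equation*}
\sum_{n=1}^{\infty} |\lambda_g(n)| \, W\!\left(\frac{n}{X}\right) \leq \left(\sum_{n=1}^{\infty} W\!\left(\frac{n}{X}\right)\right)^{1/2} \left(\sum_{n=1}^{\infty} |\lambda_g(n)|^2 \, W\!\left(\frac{n}{X}\right)\right)^{1/2},
\end{equation*}
so that the task reduces to estimating the second moment smoothed by $W$. The first factor is $O(X^{1/2})$ since $W$ is a fixed compactly supported smooth function.

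To handle the second factor, I would use that the Rankin--Selberg $L$-function $L(s, g \otimes \bar g) = \zeta(2s) \sum_{n \geq 1} |\lambda_g(n)|^2 n^{-s}$ (up to the standard Euler factors) admits meromorphic continuation to $\mathbb{C}$ with only a simple pole at $s = 1$ of residue $c_g > 0$, and is of polynomial growth in vertical strips. Writing the second moment as a Mellin inverse transform,
\begin{equation*}
\sum_{n=1}^{\infty} |\lambda_g(n)|^2 \, W\!\left(\frac{n}{X}\right) = \frac{1}{2\pi i} \int_{(2)} \widetilde{W}(s) \, X^s \sum_{n=1}^{\infty} \frac{|\lambda_g(n)|^2}{n^s} \, \mathrm{d}s,
\end{equation*}
where $\widetilde W$ denotes the Mellin transform of $W$ (which decays faster than any polynomial on vertical lines by integration by parts), I would shift the contour to $\Re(s) = \varepsilon$. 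The pole at $s=1$ contributes $c_g\, \widetilde{W}(1)\, X$, while the remaining integral is $O_{g,\varepsilon}(X^{\varepsilon})$ by standard convexity bounds for $L(s, g \otimes \bar g)$ and the rapid decay of $\widetilde{W}$. This yields
\begin{equation*}
\sum_{n=1}^{\infty} |\lambda_g(n)|^2 \, W\!\left(\frac{n}{X}\right) \ll_{g,\varepsilon} X^{1+\varepsilon}.
\end{equation*}

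Combining the two estimates gives $X^{1/2} \cdot X^{1/2 + \varepsilon/2} \ll_{g,\varepsilon} X^{1+\varepsilon}$, as desired. The only non-trivial input is the existence and analytic properties of $L(s, g \otimes \bar g)$, which I would simply cite from Jacquet--Shalika (or for the divisor function, use the classical bound $\sum_{n \leq X} \tau(n)^2 \ll X \log^3 X$). There is no real obstacle beyond invoking these standard facts; the proof is essentially a one-line application of Cauchy--Schwarz followed by a Rankin--Selberg bound.
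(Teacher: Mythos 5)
Your proof is correct and follows essentially the same route as the paper, which simply cites Cauchy--Schwarz together with the Rankin--Selberg second-moment estimate $\sum_{n\leq X}|\lambda_g(n)|^2\ll X^{1+\varepsilon}$; your Mellin-transform derivation of that estimate is just a fuller writeup of the cited input. The only cosmetic point is that the Cauchy--Schwarz split as written presumes $W\geq 0$ (otherwise replace $W$ by $|W|$ or a nonnegative majorant), a technicality the paper also elides.
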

This follows from the Cauchy--Schwarz inequality and the Rankin--Selberg estimate $\sum_{n\leq X}\left|\lambda_g(n)\right|^2\ll_g X$ (see \cite{Molteni}).

\section{Sketch of the proof}
Since there are quite a few auxiliary parameters involved in our proof, some are essential while others are not, in this section we provide a quick sketch to guide the reader through the essential part of our argument. 

For $\chi \bmod M$, we start with the following sum
\begin{equation*}
S(N):=\sum_{n\sim N}\lambda_g(n)\chi(n),
\end{equation*}
 where the sum is interpreted with a smooth function which controls the support being attached. Through out this section we will not display the smooth test functions in each transformations. We will focus on the generic term, ignoring various error terms whose contributions are not essential. Our goal is to beat the convexity bound $O(N)$ of $S(N)$.

We let $P,L\gg 1$ be two parameters such that $PM\gg NL$. By using the Hecke relation $\lambda_g(n\ell)\approx \lambda_g(n)\lambda_g(\ell)$ and the approximation 
$$\delta(n,r\ell)\approx \frac{1}{pM}\sumstar_{\alpha(pM)}e\left(\frac{\alpha(n-r\ell)}{pM}\right),$$
we can write
\begin{equation*}
\begin{split}
S(N)=&\frac{1}{L}\sum_{\ell\sim L}\overline{\lambda_g(\ell)}\sum_{n\sim N\ell}\lambda_g(n)\sum_{r\sim N}\chi(r)\delta(n,r\ell)\\
\approx&\frac{1}{L}\sum_{\ell\sim L}\overline{\lambda_g(\ell)}\sum_{n\sim NL}\lambda_g(n)\sum_{r\sim N}\chi(r)\frac{1}{P}\sum_{p\sim P}\frac{1}{pM}\sumstar_{\alpha(pM)}e\left(\frac{\alpha(n-r\ell)}{pM}\right)\\
\approx& \frac{1}{P^2ML}\sum_{p\sim P}\sum_{\ell\sim L}\overline{\lambda_g(\ell)}\sumstar_{\alpha(pM)}\sum_{n\sim NL}\lambda_g(n)e\left(\frac{\alpha n}{pM}\right)\sum_{r\sim N}\chi(r)e\left(\frac{-\alpha r\ell}{pM}\right).
\end{split}
\end{equation*}
Here $p\sim P$ and $\ell\sim L$ denote primes in the dyadic intervals $[P,2P]$ and $[L,2L]$.

We then dualize the $n$ and $r$ sums using Voronoi summation and Poisson summation, respectively, getting
\begin{equation*}\sum_{n\sim NL}\lambda_g(n)e\left(\frac{\alpha n}{pM}\right)\leftrightarrow \frac{NL}{pM}\sum_{n\sim \frac{P^2M^2}{NL}}\overline{\lambda_g(n)}e\left(\frac{-\bar{\alpha}n}{pM}\right)
\end{equation*}
and
\begin{equation*}
\begin{split}
\sum_{r\sim N}\chi(r)e\left(\frac{-\alpha r\ell}{pM}\right)\leftrightarrow& \frac{N}{pM}\sum_{r<\frac{PM}{N}}\sum_{\beta(pM)}\chi(\beta)e\left(\frac{-\alpha \beta \ell }{pM}\right)e\left(\frac{\beta r}{pM}\right) \\
=&\frac{N}{pM}\sum_{r<\frac{PM}{N}}g_\chi \chi(p)\bar{\chi}(r-\alpha \ell)\cdot p\delta_{r\equiv \alpha \ell \bmod p}.
\end{split}\end{equation*}
By splitting $\sumstar_{\alpha(pM)}$ into $\sumstar_{\alpha(M)}$ and $\sumstar_{\alpha(p)}$ and putting things together, we get
\begin{equation*}
\begin{split}
S(N)=&\frac{1}{P^2ML}\sum_{p\sim P}\sum_{\ell\sim L}\overline{\lambda_g(\ell)}\frac{NL}{pM}\sum_{n\sim \frac{P^2M^2}{NL}}\overline{\lambda_g(n)}\\
&\cdot\frac{N}{pM}\sum_{r<\frac{PM}{N}}\left(\sumstar_{\alpha(M)}e\left(\frac{-\bar{\alpha}n\bar{p}}{M}\right)g_\chi \chi(p)\bar{\chi}(r-\alpha \ell)\right) \cdot \sumstar_{\alpha(p)}\left(e\left(\frac{-\bar{\alpha}n\bar{M}}{p}\right)p\delta_{r\equiv \alpha \ell \bmod p}\right)\\
\approx& \frac{N^2}{P^3M^{5/2}}\sum_{n\sim \frac{P^2M^2}{NL}}\overline{\lambda_g(n)}\sum_{p\sim P}\chi(p)\sum_{\ell\sim L}\overline{\lambda_g(\ell)}\sum_{r<\frac{PM}{N}}e\left(\frac{-\bar{r}n\ell\bar{M}}{p}\right)\sumstar_{\alpha(M)}e\left(\frac{-\bar{\alpha}n\ell\bar{p}}{M}\right)\bar{\chi}(r-\alpha).
\end{split}
\end{equation*}
We then use Cauchy--Schwarz to remove the $\rm GL(2)$ coefficients $\overline{\lambda_g(n)}$ and get
\begin{equation*}
\begin{split}
S(N)\ll& \frac{N^2}{P^3M^{5/2}}\left(\sum_{n\sim \frac{P^2M^2}{NL}}|\overline{\lambda_g(n)}|^2\right)^{1/2}\\
&\left(\sum_{n\sim \frac{P^2M^2}{NL}}\bigg|\sum_{p\sim P}\chi(p)\sum_{\ell\sim L}\overline{\lambda_g(\ell)}\sum_{r<\frac{PM}{N}}e\left(\frac{-\bar{r}n\ell\bar{M}}{p}\right)\sumstar_{\alpha(M)}e\left(\frac{-\bar{\alpha}n\ell\bar{p}}{M}\right)\bar{\chi}(r-\alpha)\bigg|^2\right)^{1/2}.
\end{split}
\end{equation*}
\begin{Remark}\label{ell-sum}
The contribution from the ``diagonal term" $(p_1,\ell_1,r_1,\alpha_1)=(p_2,\ell_2,r_2,\alpha_2)$ is given by
\begin{equation*}
\begin{split}
S_{\text{diag}}\ll& \frac{N^2}{P^3M^{5/2}}\frac{P^2M^2}{NL}\left(PL\frac{PM}{N}M\right)^{1/2}\ll \frac{N^{1/2}M^{1/2}}{L^{1/2}},
\end{split}
\end{equation*}
which improves over the trivial bound $O(M)$ as long as $L$ has some size.
\end{Remark}
Opening the absolute valued square and switching the order of summations, we get
\begin{equation*}
\begin{split}
S(N)\ll& \frac{N^{3/2}}{P^2M^{3/2}L^{1/2}}\bigg(\sum_{p_1,p_2}\chi(p_1)\bar{\chi}(p_2)\sum_{\ell_1,\ell_2}\overline{\lambda_g(\ell_1)}\lambda_g(\ell_2)\sum_{r_1<\frac{PM}{N}}\sum_{r_2<\frac{PM}{N}}\sumstar_{\alpha_1(M)}\sumstar_{\alpha_2(M)}\bar{\chi}(r_1-\alpha_1)\chi(r_2-\alpha_2)\\
&\sum_{n\sim \frac{P^2M^2}{NL}}e\left(\frac{-\overline{r_1}n\ell_1\bar{M}}{p_1}+\frac{\overline{r_2}n\ell_2\bar{M}}{p_2}\right)e\left(\frac{-\overline{\alpha_1}n\ell_1\overline{p_1}}{M}+\frac{\overline{\alpha_2}n\ell_2\overline{p_2}}{M}\right)\bigg)^{1/2}.
\end{split}
\end{equation*}
The $n$-sum in the second line above, by applying Poisson summation, is
\begin{equation*}
\begin{split}
\sum_{n\sim \frac{P^2M^2}{NL}}(\cdots)\leftrightarrow& \frac{P^2M^2}{NL}\frac{1}{p_1p_2M}\sum_{n\ll \frac{p_1p_2M}{P^2M^2/NL}}\sum_{\beta(p_1p_2)}e\left(\frac{\beta(-\overline{r_1}\ell_1p_2+\overline{r_2}\ell_2p_1+n)\bar{M}}{p_1p_2}\right)\\
&\quad\quad\sum_{\gamma(M)}e\left(\frac{\gamma(-\overline{\alpha_1}\ell_1\overline{p_1}+\overline{\alpha_2}\ell_2\overline{p_2}+n\overline{p_1p_2})}{M}\right)\\
=&\frac{P^2M^2}{NL}\sum_{n\ll \frac{NL}{M}}\delta_{-\overline{r_1}\ell_1p_2+\overline{r_2}\ell_2p_1+n\equiv 0\bmod p_1p_2}\cdot \delta_{\alpha_2\equiv \ell_2p_1\overline{\overline{\alpha_1}\ell_1p_2-n}\bmod M}.
\end{split}
\end{equation*}
Plugging this back to the previous estimate, we obtain
\begin{equation}\label{averaged-sum}
\begin{split}
S(N)\ll& \frac{N}{PM^{1/2}L}\bigg|\sum_{p_1,p_2}\chi(p_1)\bar{\chi}(p_2)\sum_{\ell_1,\ell_2}\overline{\lambda_g(\ell_1)}\lambda_g(\ell_2)\sum_{r_1,r_2<\frac{PM}{N}}\sum_{n\ll \frac{NL}{M}}\delta_{-\overline{r_1}\ell_1p_2+\overline{r_2}\ell_2p_1+n\equiv 0\bmod p_1p_2}\\
&\cdot \sumstar_{\alpha(M)}\bar{\chi}(r_1-\alpha)\chi\left(r_2-\ell_2p_1\overline{\bar{\alpha}\ell_1p_2-n}\right)\bigg|^{1/2}.
\end{split}
\end{equation}
The contribution from the zero-frequency $n=0$ will essentially correspond to the diagonal contribution $$S_{\text{diag}}\ll \frac{N^{1/2}M^{1/2}}{L^{1/2}}$$ in Remark \ref{ell-sum}.

For $n\neq 0$, assuming the parameters $(p_1,\ell_1,r_1,p_2,\ell_2,r_2)$ are in the generic position, then by appealing to Weil's theory, one has
\begin{equation}\label{Weil' theory}
\frac{1}{M^{1/2}}\sumstar_{\alpha(M)}\bar{\chi}(r_1-\alpha)\chi\left(r_2-\ell_2p_1\overline{\bar{\alpha}\ell_1p_2-n}\right)\ll 1.
\end{equation}
Hence the contribution $S_{\text{off}}$ from the non-zero frequencies $n\neq 0$ is given by
\begin{equation*}
\begin{split}
S_{\text{off}}\ll \frac{N}{PM^{1/2}L}\bigg(P^2L^2\frac{PM}{N}\frac{PM}{N}
\frac{NL}{M}M^{1/2}\frac{1}{P^2}\bigg)^{1/2}\ll N^{1/2}M^{1/4}L^{1/2}.
\end{split}
\end{equation*}
In conclusion, we obtain 
\begin{equation*}
\begin{split}
S(N)\ll S_{\text{diag}}+S_{\text{off}}\ll \frac{N^{1/2}M^{1/2}}{L^{1/2}}+N^{1/2}M^{1/4}L^{1/2}.
\end{split}
\end{equation*}
By choosing $L=M^{1/4}$ and $P>\frac{N}{M^{3/4}}$ we obtain
\begin{equation*}
\begin{split}
S(N)\ll  N^{1/2}M^{3/8},
\end{split}
\end{equation*}
which will imply the Burgess bound as stated in Theorem \ref{cusp form thm}.
\begin{Remark}
One can interpret the left hand side of \eqref{Weil' theory} as a \emph{Frobenius trace function} $K(n)$ (depending on $(p_1,\ell_1,r_1,p_2,\ell_2,r_2)$) modulo $M$. In order to improve the Burgess bound, one might think to improve the bound \eqref{Weil' theory} ``on average", by taking advantage of the extra summation over $n$ in \eqref{averaged-sum}. But this is difficult since the length of the summation is below the Poly\' a--Vinogradov range.
\end{Remark}

\section{The set-up}
For any $N\geq 1$, define the following sum
\begin{equation}\label{S(N)}
S(N)=\sum_{n=1}^{\infty}\lambda_g(n)\chi(n)W\left(\frac{n}{N}\right),
\end{equation}
where $W$ is a smooth bump function supported on $[1,2]$ with $W^{(j)}(x)\ll_j 1$. Estimating the sum trivially with \eqref{Rankin--Selberg} gives the bound $S(N)\ll_\varepsilon N^{1+\varepsilon}$. Using an approximate functional equation of $L(s, g\otimes\chi)$, one can derive the following.
\begin{Lemma}\label{approximate functional equation}
For any $0<\delta<1$, we have
\begin{equation*}
L\left(\frac{1}{2},g\otimes \chi \right)\ll M^{\varepsilon}\sup_{N}\frac{|S(N)|}{\sqrt{N}}+M^{1/2-\delta/2+\varepsilon},
\end{equation*}
where the supremum is taken over $N$ in the range $M^{1-\delta}<N<M^{1+\varepsilon}$.
\end{Lemma}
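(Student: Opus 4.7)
The plan is to deduce this estimate from a standard approximate functional equation (AFE) for $L(s,g\otimes\chi)$, a smooth dyadic decomposition of the resulting sum, and the trivial Rankin--Selberg bound on the short-variable dyadic pieces.

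Since $g$ has level $1$ and $\chi$ is primitive modulo the prime $M$, the completed $L$-function $\Lambda(s,g\otimes\chi)$ has analytic conductor of size $M^2$. Shifting contours in the inverse Mellin transform of $\Lambda(s+1/2,g\otimes\chi)$ against a rapidly decaying test function yields the AFE
\[
L\!\left(\tfrac{1}{2},g\otimes\chi\right) \ll \left|\sum_{n=1}^\infty \frac{\lambda_g(n)\chi(n)}{\sqrt{n}}\,U\!\left(\frac{n}{M}\right)\right| + \left|\sum_{n=1}^\infty \frac{\overline{\lambda_g(n)\chi(n)}}{\sqrt{n}}\,U\!\left(\frac{n}{M}\right)\right| + O(M^{-A}),
\]
where $U$ is smooth with $U(x)\ll_A (1+x)^{-A}$, so that the effective cutoff is $n \ll M^{1+\varepsilon}$. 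Applying a smooth dyadic partition of unity to the $n$-variable rewrites each of the two sums above as $\sum_{N} N^{-1/2}|S(N)|$, where $N$ runs over dyadic numbers with $N \leq M^{1+\varepsilon}$ (the tail $N > M^{1+\varepsilon}$ being negligible) and the bump $W$ in \eqref{S(N)} depends on $N$ but remains smooth, supported in $[1,2]$, with derivatives bounded uniformly in $N$.

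Next, I would split this dyadic sum at the threshold $N = M^{1-\delta}$. For $M^{1-\delta} < N < M^{1+\varepsilon}$, the contribution is majorised by $M^{\varepsilon}\sup_N |S(N)|/\sqrt{N}$ taken over that range. For $N \leq M^{1-\delta}$, the trivial bound $|S(N)|\ll N^{1+\varepsilon}$, obtained from Cauchy--Schwarz together with Lemma~\ref{Ramanujan}, yields
\[
\frac{|S(N)|}{\sqrt{N}} \ll N^{1/2+\varepsilon} \ll M^{1/2-\delta/2+\varepsilon},
\]
and the dyadic summation over $N$ introduces only a harmless $\log M$ factor. Adding the two contributions gives the claimed inequality.

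The argument is essentially routine; the only item calling for a bit of care is writing the AFE uniformly across the archimedean type of $g$ (holomorphic of weight $k$ versus Maass with spectral parameter $r$) and keeping track of the smoothness of the dyadic cutoffs produced by the partition of unity. Neither constitutes a genuine obstacle, and I would not expect any step here to present real difficulty.
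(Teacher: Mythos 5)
Your proposal is correct and is precisely the standard argument (AFE with effective cutoff $n\ll M^{1+\varepsilon}$ from the conductor $M^2$, smooth dyadic decomposition, and the trivial Rankin--Selberg bound $S(N)\ll N^{1+\varepsilon}$ for $N\leq M^{1-\delta}$) that the paper invokes without writing out, so there is no route difference to report. The one small point worth a sentence in a careful write-up is that the dual side of the AFE produces a sum twisted by $\overline{\chi}$ rather than $\chi$; since $\chi$ and $\overline{\chi}$ are interchangeable everywhere in the estimate this is harmless, but it should be acknowledged.
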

 From the above lemma, it suffices to improve the bound $S(N)\ll_\varepsilon N^{1+\varepsilon}$ in the range $M^{1-\delta}<N<M^{1+\varepsilon}$, where $\delta>0$ is a constant to be chosen later. 

Let $\mathcal{L}$ be the set of primes $\ell$ in the dyadic interval $[L,2L]$, where $L<M^{1-\varepsilon}$ is a parameter to be determined later. Denote $L^\star=\sum_{\ell\in \mathcal{L}}|\lambda_g(\ell)|^2$. Then $L^\star\asymp \frac{L}{\log L}$, as the following argument shows.
For $\varepsilon>0$,
\begin{equation*}
\sum_{\ell\in \mathcal {L}}|\lambda_g(\ell)|^2\asymp \frac{1}{\log L}
\sum_{\ell\in \mathcal {L}}(\log \ell)|\lambda_g(\ell)|^2
=\frac{1}{\log L}\sum_{n=L}^{2L}\Lambda(n)|\lambda_g(n)|^2
+O\left(L^{1-\varepsilon}\right),
\end{equation*}
where $\Lambda(n)$ is the Von Mangoldt function. By the prime number theorem for
automorphic representations (see \cite[Corollary 1.2]{LiuWangYe}), we have $\sum_{L\leq n\leq 2L}\Lambda(n)|\lambda_g(n)|^2 \sim L$. 
Thus
\begin{equation*}
L^\star=\sum_{\ell\in \mathcal {L}}|\lambda_g(\ell)|^2\asymp \frac{L}{\log L}.
\end{equation*}

Similarly, we let $P$ be a parameter and $\mathcal{P}$ be the set of primes $p$ in the dyadic interval $[P,2P]$. Denote $P^\star=\sum_{p\in \mathcal{P}}1\asymp \frac{P}{\log P}$. We will choose $P$ and $L$ so that $\mathcal{P}\cap \mathcal{L}=\emptyset$.

Let $p\in \mathcal{P}$, $n\asymp NL$ and $r\asymp N$. For $\varepsilon>0$ and $PM\gg (NL)^{1+\varepsilon}$, the condition $n=r\ell$ is equivalent to the congruence $n\equiv r\ell\bmod pM$. Since $N<M^{1+\varepsilon}$, we assume that, 
\begin{equation}\label{restriction 1}
P\gg L^{1+\varepsilon}.
\end{equation}
Therefore, under the assumption
\begin{equation*}\label{restriction 2}
PM\gg (NL)^{1+\varepsilon},
\end{equation*}
by using the detection \eqref{trivial delta} with $q=pM$, the main sum of interest $S(N)$ defined in \eqref{S(N)} can be expressed as
\begin{equation}\label{start}
\begin{split}
S(N)
=&\frac{1}{L^\star}\sum_{\ell\in\mathcal{L}}\overline{\lambda_g(\ell)}\sum_{n=1}^{\infty}\lambda_g(n)W\left(\frac{n}{N\ell}\right)\sum_{r=1}^{\infty}\chi(r)V\left(\frac{r}{N}\right)\delta(n=r\ell)\left(\frac{n}{r\ell}\right)^{iv}+O\left(\frac{N^{1+\varepsilon}}{L}\right)\\
=&\frac{1}{L^\star P^\star}\sum_{\ell\in\mathcal{L}}\overline{\lambda_g(\ell)}\sum_{p\in\mathcal{P}}\frac{1}{pM}\sum_{c|pM}\quad\sumstar_{\alpha\bmod c}\sum_{n=1}^{\infty}\lambda_g(n)e\left(\frac{\alpha n}{c}\right)\left(\frac{n}{N\ell}\right)^{iv}W\left(\frac{n}{N\ell}\right)\\&\qquad\qquad\qquad\qquad\times\sum_{r=1}^{\infty}\chi(r)e\left(\frac{-\alpha r\ell}{c}\right)\left(\frac{r}{N}\right)^{-iv}V\left(\frac{r}{N}\right) + O\left(\frac{N^{1+\varepsilon}}{L}\right).
\end{split}
\end{equation}
Here $V$ is a smooth function supported on $[1/2,3]$, constantly $1$ on $[1,2]$ and satisfies $V^{(j)}(x)\ll_j 1$, and $\overline{\lambda_g(\ell)}$ denotes $\tau(\ell)^{-1}$ when $\lambda_g$ is the divisor function $\tau$, and
\begin{equation}\label{size-of-v}
    v:=M^{\varepsilon}.
\end{equation}The error term $O\left(N^{1+\varepsilon}L^{-1}\right)$ arises from the Hecke relation $\lambda_g(r\ell)=\lambda_g(r)\lambda_g(\ell)-\delta_{\ell |r}\lambda_g(1)\lambda_g(r/\ell)$. 

\begin{Remark}(1) The point of introducing the extra $(n/r\ell)^{iv}$ factor with $v=M^{\varepsilon}$ is to insure that, after applying Voronoi and Poisson summations, in \eqref{after dual summations} the dual $n$ and $r$ summations are essentially supported on dyadic intervals, which would help simplify the subsequent counting arguments.
\par
(2) The extra sum over $\ell$ is reminiscent of the amplification technique in \cite{DFI} (see also \cite{Munshi17, HN17, Lin}). Without introducing it we will be at the threshold to beat the convexity bound (cf. Remark \ref{ell-sum}).
\end{Remark}
Our strategy then is to apply dual summation formulas to the $n$ and $r$-sums, followed by applications of Cauchy--Schwarz inequality and Poisson summation to the $n$-sum. A careful analysis of the resulting congruence conditions together with Deligne's theory of exponential sums yields the final bounds.

\section{Application of dual summation formulas}\label{Voronoi and Poisson}

We start with an application of the Voronoi summation formula (Lemma \ref{voronoi}) to the $n$-sum in equation \eqref{start}. Then,
\begin{equation}\label{after voronoi1}
\begin{split}
S(N)
=&\mathcal{I}_{g=\tau}+\frac{N}{L^\star P^\star}\sum_{\pm}\sum_{\ell\in\mathcal{L}}\overline{\lambda_g(\ell)}\ell\sum_{p\in\mathcal{P}}\frac{1}{pM}\sum_{c|pM}\frac{1}{c}\quad\sum_{n=1}^{\infty}\lambda_g(n)\widecheck{W}_{v,g}^{\pm}\left(\frac{n}{c^2/N\ell}\right)\\&\qquad\qquad\qquad\qquad\times\sum_{r=1}^{\infty}\chi(r)S(r\ell,\pm n;c)\left(\frac{r}{N}\right)^{-iv}V\left(\frac{r}{N}\right)+ O\left(\frac{N^{1+\varepsilon}}{L}\right),
\end{split}
\end{equation}
where $\mathcal{I}_{g=\tau}$ vanishes if $g$ is a cusp form, otherwise it is given by
\begin{equation*}
\begin{split}
\mathcal{I}_{g=\tau}=&\frac{N}{L^\star P^\star}\sum_{\ell\in\mathcal{L}}\overline{\lambda_g(\ell)}\ell\sum_{p\in\mathcal{P}}\frac{1}{pM}\sum_{c|pM}\frac{1}{c}\quad \sumstar_{\alpha\bmod c}\sum_{r=1}^{\infty}\chi(r)e\left(\frac{-\alpha r\ell}{c}\right)\left(\frac{r}{N}\right)^{-iv}V\left(\frac{r}{N}\right)\\
&\quad\quad\times \int_0^\infty \left(\log x+2\gamma+\log N\ell-2\log c\right)W_v(x)\mathrm{d}x. 
\end{split}
\end{equation*}
Here $\widecheck{W}_{v,g}^{\pm}$ denotes the Hankel transform of the function $W_v(y):=y^{iv}W(y)$, defined in Lemma \ref{voronoi}. By performing a stationary phase argument, the function $\widecheck{W}_{v,g}^{\pm}(x)$ is negligibly small, unless $x\asymp v^2=M^{2\varepsilon}$ (for a proof see \cite[Lemma 3.3]{LMS}). Hence the $n$-variable in \eqref{after voronoi1} is supported on $n\asymp \frac{c^2v^2}{N\ell}$.

Next, we apply Poisson summation to the $r$-sums in \eqref{after voronoi1}. We introduce a few notations. For $a,b\in\BZ$, we let $[a,b]$ be the lcm of $a$ and $b$; let $(a,b)$ be the gcd of $a$ and $b$, and let $a_b=a/(a,b)$. We note that if $a$ is squarefree, then $(b,a_b)=1$. 

Writing $S(r\ell,\pm n;c)=\sumstar_{\alpha\bmod c}e\left(\frac{\mp\overline{\alpha} n-\alpha r\ell}{c}\right)$, the $r$-sums in \eqref{after voronoi1} are given by
\begin{equation*}
\sum_{r\geq1}\chi(r)e\left(\frac{-\alpha r\ell}{c}\right)\left(\frac{r}{N}\right)^{-iv}V\left(\frac{r}{N}\right).
\end{equation*}
Breaking the sum modulo $[c,M]$ and applying the Poisson summation formula, the $r$-sum becomes
\begin{equation}\label{r-sum}
\frac{N}{[c,M]} \sum_{r\in\BZ}\left(\sum_{\beta\bmod[c,M]}\chi(\beta)e\left(\frac{-\alpha\beta\ell}{c}\right)e\left(\frac{r\beta}{[c,M]}\right)\right)\widehat{V_v}\left(\frac{rN}{[c,M]}\right),
\end{equation}
where $V_v(y):=y^{-iv}V(y)$ and $\widehat{V_v}$ denotes the Fourier transform of $V_v$.

Using the relation $[c,M]=Mc_M$ and reciprocity, the $\beta$-sum can be rewritten as
\begin{equation*}
\begin{split}
&\sum_{\beta\bmod M}\chi(\beta)e\left(\frac{(r-\alpha\ell M_c)\overline{c_M}\beta}{M}\right) \times \sum_{\beta\bmod c_M}e\left(\frac{(r-\alpha\ell M_c)\overline{M}\beta}{c_M}\right)\\
=& \bar{\chi}((r-\alpha\ell M_c)\overline{c_M})g_\chi\times c_M\delta(r-\alpha\ell M_c\equiv0\bmod c_M),
\end{split}
\end{equation*}
where $g_\chi$ is the Gauss sum. By \eqref{Vcheck}, one can truncate the $r$-sum at $|r|\ll [c,M]N^{\varepsilon}/N$, up to a negligible error. Therefore \eqref{r-sum} becomes
\begin{equation*}
\frac{Ng_\chi}{M}\underset{\begin{subarray}{c}
|r|\ll [c,M]N^{\varepsilon}/N \\ r-\alpha\ell M_c\equiv0\bmod c_M
\end{subarray}}{\sum}\bar{\chi}((r-\alpha\ell M_c)\overline{c_M}) \widehat{V_v}\left(\frac{rN}{[c,M]}\right) + O(N^{-2018}).
\end{equation*}
Substituting the above expression into \eqref{after voronoi1}, we arrive at
\begin{equation}\label{after dual summations}
\begin{split}
S(N)=\mathcal{M}_{g=\tau}+\frac{N^2g_\chi}{ML^\star P^\star}&\sum_{\pm}\sum_{\ell\in\mathcal{L}}\overline{\lambda_g(\ell)}\ell\sum_{p\in\mathcal{P}}\frac{1}{pM}\sum_{c|pM}\frac{1}{c}\quad\sumstar_{\alpha\bmod c}\sum_{n=1}^{\infty}\lambda_g(n)e\left(\mp\frac{\overline{\alpha} n}{c}\right)\widecheck{W}_{v,g}^{\pm}\left(\frac{n}{c^2/N\ell}\right)\\&\underset{\begin{subarray}{c}
|r|\ll [c,M]N^{\varepsilon}/N \\ r-\alpha\ell M_c\equiv0\bmod c_M
\end{subarray}}{\sum}\bar{\chi}((r-\alpha\ell M_c)\overline{c_M}) \widehat{V_v}\left(\frac{rN}{[c,M]}\right) + O\left(\frac{N^{1+\varepsilon}}{L}\right).
\end{split}\end{equation}
The term $\mathcal{M}_{g=\tau}$ vanishes if $g$ is a cusp form. Otherwise it is given by
\begin{equation}\label{M(g=tau) bound}
\begin{split}
\mathcal{M}_{g=\tau}=&\frac{N^2g_\chi}{ML^\star P^\star}\sum_{\ell\in\mathcal{L}}\overline{\lambda_g(\ell)}\ell\sum_{p\in\mathcal{P}}\frac{1}{pM}\sum_{c|pM}\frac{1}{c}\quad\sumstar_{\alpha\bmod c} \quad\underset{\begin{subarray}{c}
|r|\ll [c,M]N^{\varepsilon}/N \\ r-\alpha\ell M_c\equiv0\bmod c_M
\end{subarray}}{\sum}\bar{\chi}((r-\alpha\ell M_c)\overline{c_M}) \widehat{V_v}\left(\frac{rN}{[c,M]}\right)\\
&\quad\times\int_0^\infty \left(\log x+2\gamma+\log N\ell-2\log c\right)W_v(x)\mathrm{d}x\\
\ll & \frac{N(PM)^\varepsilon}{\sqrt{M}LP}\sum_{1\leq\ell\leq 2L}|\lambda_g(\ell)|\ell\\
\ll& (PML)^{\varepsilon}\frac{NL}{PM^{1/2}}.
\end{split}
\end{equation}
The last inequality is deduced by an application of Cauchy--Schwarz inequality and lemma \ref{Ramanujan}. We similarly bound the sums in \eqref{after dual summations} corresponding to $c=1, p, M$. When $c=1$, we get arbitrarily small contribution because of the weight functions $\widecheck{W}^\pm$. When $c=p$, we again get arbitrarily small contribution (because of the weight functions $\widecheck{W}^\pm$) since we will choose $P$ such that 
\begin{equation}\label{c=p}
P^2<M^{1-\delta}L.
\end{equation}
When $c=M$, we have
\begin{equation}\label{c=M}
\begin{split}
&\frac{N^2g_\chi}{ML^\star P^\star}\sum_{\pm}\sum_{\ell\in\mathcal{L}}\overline{\lambda_g(\ell)}\ell\sum_{p\in\mathcal{P}}\frac{1}{pM^2}\sum_{n=1}^{\infty}\lambda_g(n)\widecheck{W}_{v,g}^{\pm}\left(\frac{n}{M^2/N\ell}\right)\\&\times\underset{\begin{subarray}{c}
|r|\ll MN^{\varepsilon}/N\end{subarray}}{\sum}\left(\underset{\alpha\bmod M}{\sumstar} \bar{\chi}(r-\alpha\ell)e\left(\mp\frac{\overline{\alpha} n}{M}\right)\right)\widehat{V_v}\left(\frac{rN}{M}\right)\\
\ll & (PML)^{\varepsilon}\frac{M}{P}.
\end{split}
\end{equation}
This bound is obtained by making use of Lemma \ref{squareroot 1} to get $\underset{\alpha\bmod M}{\sumstar} \bar{\chi}(r-\alpha\ell)e\left(\mp\frac{\overline{\alpha} n}{M}\right)\ll M^{1/2}$. Therefore from \eqref{after dual summations}, \eqref{M(g=tau) bound}  and \eqref{c=M}, we obtain,

\begin{equation}\label{S(N) and S(N) star}
S(N) = S^\star(N) + O\left((PML)^{\varepsilon}\left(\frac{NL}{PM^{1/2}} + \frac{M}{P} + \frac{N}{L}\right)\right)
\end{equation}
under the condition $P^2<M^{1-\delta}L$, where
\begin{equation*}
\begin{split}
S^\star(N) = &\frac{N^2g_\chi}{M^3L^\star P^\star}\sum_{\pm}\sum_{\ell\in\mathcal{L}}\overline{\lambda_g(\ell)}\ell\sum_{p\in\mathcal{P}}\frac{\chi(p)}{p^2}\sum_{n=1}^{\infty} \lambda_g(n)\widecheck{W}_{v,g}^{\pm}\left(\frac{n}{p^2M^2/N\ell}\right)\\&\times\underset{\begin{subarray}{c}
|r|\ll pMN^{\varepsilon}/N 
\end{subarray}}{\sum}\left(\underset{\alpha\bmod pM}{\sumstar}\bar{\chi}(r-\alpha\ell)e\left(\mp\frac{\overline{\alpha} n}{pM}\right)\delta(r-\alpha\ell \equiv0\bmod p)\right) \widehat{V_v}\left(\frac{rN}{pM}\right) + O(N^{-2018}).
\end{split}
\end{equation*}

Since $(p,M)=1$, the sum $\underset{\alpha\bmod pM}{\sumstar}\bar{\chi}(r-\alpha\ell)e\left(\mp\frac{\overline{\alpha} n}{pM}\right)\delta(r-\alpha\ell \equiv0\bmod p)$ factors as
\begin{equation*}
\begin{split}
&\underset{\alpha\bmod M}{\sumstar}\bar{\chi}(r-\alpha\ell)e\left(\mp\frac{\overline{\alpha p} n}{M}\right) \times \underset{\alpha\bmod p}{\sumstar}e\left(\mp\frac{\overline{\alpha M} n}{p}\right)\delta(r-\alpha\ell \equiv0\bmod p)\\
&=e\left(\mp\frac{\overline{rM} n\ell}{p}\right) \underset{\alpha\bmod M}{\sumstar}\bar{\chi}(r-\alpha\ell)e\left(\mp\frac{\overline{\alpha p} n}{M}\right),
\end{split}\end{equation*}
Since $(r,p)=1$, we have $r\neq0$. Moreover, it suffices to estimate the `minus' term of $S^\star(N)$ since the estimates of the `plus' terms will be similar. By abuse of notation, we write $\widecheck{W}_{v,g}^{-}$ as $\widecheck{W}_v$. Then,
\begin{equation*}
\begin{split}
S^\star(N)=&\frac{N^2g_{\chi}}{L^\star P^\star M^3}\sum_{n=1}^{\infty}\lambda_g(n)\sum_{\ell\in\mathcal{L}}\overline{\lambda_g(\ell)}\ell\sum_{p\in\mathcal{P}}\frac{\chi(p)}{p^2}\sum_{(r,p)=1}e\left(\frac{-\bar{r}n\ell \bar{M}}{p}\right)\\
&\sumstar_{\alpha(M)}\bar{\chi}(r+\alpha)e\left(\frac{\bar{\alpha}n\ell\bar{p}}{M}\right)\widehat{V_v}\left(\frac{r}{pM/N}\right)\widecheck{W}_v\left(\frac{n}{p^2M^2/N\ell}\right).
\end{split}
\end{equation*}

Munshi treated a sum similar to ours in \cite[P.13]{Munshi17}. For the sake of completeness, we will carry out the details, but our arguments closely follow that of Munshi \cite[Section 7]{Munshi17}.

 \begin{Remark}
 (1) At this stage, if we estimate the sum $S^\star(N)$ directly (with the help of Lemma \ref{squareroot 1}), we would get
 \begin{equation*}
\begin{split}
S^\star(N)\ll N^{\varepsilon}\frac{N^2M^{1/2}}{LP M^3}\frac{P^2M^2}{NL}L^2\frac{1}{P}\frac{PM}{N}M^{1/2}\ll N^{\varepsilon}PM.
\end{split}
\end{equation*}
This falls short of $O(PM^{\eta})$ from the target bound $O(M^{1-\eta})$. In the sequel, we will use Cauchy--Schwarz to smooth the $n$-variable and apply Poisson summation thereafter to obtain extra saving.
  \par
  (2) It is natural to try to compare our method to that used by \cite{Bykovskii}, and by \cite{Blomer-Harcos, FKM}.   The ingredients are similar, the results are the same, and both methods work by embedding the modular form $g$ in question into some larger space of cusp forms on $\Gamma_0(M)$ and averaging, in our case indirectly via the Cauchy--Schwarz inequality, in \cite{Bykovskii, Blomer-Harcos, FKM}'s case more directly via the Petersson formula. We thank Paul Nelson for pointing out this analogy.
 \end{Remark}

\section{Cauchy--Schwarz and Poisson Summation}\label{Cauchy--Schwarz and Poisson}

In the following, whenever we need to bound the Fourier coefficients $\lambda_g(n)$ for $g$ a Maass form, we simply apply the Rankin--Selberg estimate $\sum_{n\leq X}\left|\lambda_g(n)\right|^2\ll X$, as a substitute of the Ramanujan bound for individual coefficients. Recall from the test function $\widecheck{W}_v$ we know that the $n$-sum is supported on $n\sim \frac{p^2M^2v^2}{N\ell}$. We set
\begin{equation*}
\CN_0=\frac{p^2M^2v^2}{NL}.
\end{equation*}
Up to an arbitrarily small error, we arrive at
\begin{equation*}
\begin{split}
S^\star(N)\ll \frac{N^{2+\varepsilon}}{L^\star P^\star M^{5/2}}&\sum_{n}|\lambda_g(n)|U\left(\frac{n}{\CN_0}\right)\bigg|\sum_{\ell\in\mathcal{L}}\overline{\lambda_g(\ell)}\ell\sum_{p\in\mathcal{P}}\frac{\chi(p)}{p^2}\sum_{(r,p)=1}e\left(\frac{-\bar{r}n\ell \bar{M}}{p}\right)\\
&\sumstar_{\alpha(M)}\bar{\chi}(r+\alpha)e\left(\frac{\bar{\alpha}n\ell\bar{p}}{M}\right)\widehat{V_v}\left(\frac{r}{pM/N}\right) \widecheck{W}_v\left(\frac{n}{p^2M^2/N\ell}\right)\bigg|.
\end{split}
\end{equation*}
Here $U$ is a smooth function with compact support contained in $\mathbb{R}_{>0}$.
Applying the Cauchy--Schwarz inequality to the $n$-sum and using the Ramanujan bound on average,
\begin{equation}\label{S star before}
\begin{split}
S^\star(N)
\ll& (NML)^{\varepsilon}\frac{N^2}{LPM^{5/2}}\CN_0^{1/2}S^\star(N,\CN_0)^{1/2} + N^{-2018},
\end{split}
\end{equation}
where
\begin{equation*}
\begin{split}
S^\star(N,\CN_0)=\sum_{n} U\left(\frac{n}{\CN_0}\right)\bigg|\sum_{\ell\in\mathcal{L}}\overline{\lambda_g(\ell)}\ell & \sum_{p\in\mathcal{P}}\frac{\chi(p)}{p^2}\sum_{(r,p)=1}e\left(\frac{-\bar{r}n\ell \bar{M}}{p}\right)\\ &\times \sumstar_{\alpha(M)}\bar{\chi}(r+\alpha)e\left(\frac{\bar{\alpha}n\ell\bar{p}}{M}\right)\widehat{V_v}\left(\frac{r}{pM/N}\right) \widecheck{W}_v\left(\frac{n}{p^2M^2/N\ell}\right)\bigg|^2.
\end{split}
\end{equation*}
Opening the square above and switching the order of summations, it suffices to bound the following 
\begin{equation}\label{before truncate}
\begin{split}
S^\star(N, \mathcal{N}_0)
&=
\sum_{\ell_1\in\mathcal{L}}\overline{\lambda_g(\ell_1)}\ell_1\sum_{\ell_2\in\mathcal{L}}\lambda_g(\ell_2)\ell_2\sum_{p_1\in\mathcal{P}}\sum_{p_2\in\mathcal{P}}\frac{\chi(p_1)\bar{\chi}(p_2)}{(p_1p_2)^2}\\
&\sum_{(r_1,p_1)=1}\sum_{(r_2,p_2)=1}\widehat{V_v}\left(\frac{r_1}{p_1M/N}\right)\overline{\widehat{V_v}\left(\frac{r_2}{p_2M/N}\right)}
\sumstar_{\alpha_1(M)}\bar{\chi}(r_1+\alpha_1)
\sumstar_{\alpha_2(M)}\chi(r_2+\alpha_2)\times\mathbf{T},
\end{split}
\end{equation}
with
\begin{equation*}
\begin{split}
\mathbf{T}=\sum_{n=1}^{\infty}e\left(\frac{-\overline{r_1}n\ell_1 \bar{M}}{p_1}\right)&e\left(\frac{\overline{r_2}n\ell_2 \bar{M}}{p_2}\right)e\left(\frac{\overline{\alpha_1}n\ell_1\overline{p_1}-\overline{\alpha_2}n\ell_2\overline{p_2}}{M}\right)\\
&\times U\left(\frac{n}{\mathcal{N}_0}\right)\widecheck{W}_v\left(\frac{n}{p_1^2M^2/N\ell_1}\right)\overline{\widecheck{W}_v\left(\frac{n}{p_2^2M^2/N\ell_2}\right)}.
\end{split}\end{equation*}

We note that one can truncate the $r_1$, $r_2$-sums in \eqref{before truncate} at $|r_1|,|r_2|\ll N^{\varepsilon}\frac{PM}{N}$, at the cost of a negligible error. For smaller values of $r_1$ and $r_2$, we will use the trivial bounds $\widehat{V_v}\left(\frac{r_1}{p_1M/N}\right), \widehat{V_v}\left(\frac{r_2}{p_2M/N}\right)\ll 1$.

Breaking the above $n$-sum modulo $p_1p_2M$ and applying Poisson summation to it,
\begin{equation*}\label{n sum after poisson}
\begin{split}
\mathbf{T}=&\frac{\mathcal{N}_0}{p_1p_2M}\sum_{n}\sum_{b\bmod{p_1p_2M}}e\left(\frac{-\overline{r_1}b\ell_1 \overline{M}}{p_1}\right) e\left(\frac{\overline{r_2}b\ell_2 \overline{M}}{p_2}\right)\\&e\left(\frac{\overline{\alpha_1}b\ell_1\overline{p_1}-\overline{\alpha_2}b\ell_2\overline{p_2}}{M}\right)e\left(\frac{bn}{p_1p_2M}\right)\, \CJ\left(\frac{n}{p_1p_2M/\mathcal{N}_0}\right),
\end{split}
\end{equation*}
where
\begin{equation*}\label{the integral J}
\begin{split}
\mathcal{J}\left(\frac{n}{p_1p_2M/\mathcal{N}_0}\right):=\int_{\BR} U(x)e\left(-\frac{n\mathcal{N}_0x}{p_1p_2M}\right)\widecheck{W}_v\left(\frac{x\CN_0}{p_1^2M^2/N\ell_1}\right)\overline{\widecheck{W}_v\left(\frac{x\CN_0}{p_2^2M^2/N\ell_2}\right)}\mathrm{d}x.
\end{split}
\end{equation*}
The integral $\CJ\left(\frac{n}{p_1p_2M/\mathcal{N}_0}\right)$ gives arbitrarily power saving in $N$ if $|n|\gg N^{\varepsilon}\frac{p_1p_2M}{\mathcal{N}_0}$. Hence we can truncate the dual $n$-sum at 
\begin{equation}\label{new-n-sum}
|n|\ll N^{\varepsilon}\frac{NL}{Mv^2},
\end{equation}at the cost of a negligible error. For smaller values of $n$, we use the trivial bound $\CJ\left(\frac{n}{p_1p_2M/\mathcal{N}_0}\right)\ll 1$. Since $(p_1p_2,M)=1$, we apply reciprocity to write
\begin{equation*}
\begin{split}
\mathbf{T}=&\frac{\mathcal{N}_0}{p_1p_2M}\sum_{n}\sum_{b\bmod{p_1p_2}}e\left(\frac{(-\overline{r_1}\ell_1p_2 +\overline{r_2}\ell_2p_1+n) \overline{M}b}{p_1p_2}\right)\\&\times \sum_{b\bmod M} e\left(\frac{(\overline{\alpha_1}\ell_1p_2-\overline{\alpha_2} \ell_2p_1+n)\overline{p_1p_2}b}{M}\right)\, \CJ\left(\frac{n}{p_1p_2M/\mathcal{N}_0}\right)\\
=&\mathcal{N}_0\underset{\begin{subarray}{c}
|n|\ll N^{\varepsilon}\frac{p_1p_2M}{\CN_0} \\ -\overline{r_1}\ell_1p_2 +\overline{r_2}\ell_2p_1+n\equiv0\bmod p_1p_2\\ (\overline{\alpha_1}\ell_1p_2+n,M)=1 \end{subarray}}{\sum} \delta(\alpha_2\equiv \ell_2p_1(\overline{\overline{\alpha_1}\ell_1p_2+n})\bmod{M})\, \CJ\left(\frac{n}{p_1p_2M/\mathcal{N}_0}\right)+O(N^{-20180}).
\end{split}
\end{equation*}
We must clarify that $\overline{r_i}$ is the inverse of $r_i\bmod p_i$ (and not $\bmod \,p_1p_2$). Substituting the above into \eqref{before truncate},
\begin{equation*}
\begin{split}
S^\star(N, \mathcal{N}_0)\ll &\CN_0\sum_{\ell_1\in\mathcal{L}} \left|\overline{\lambda_g(\ell_1)}\ell_1\right|\sum_{\ell_2\in\mathcal{L}}\left|\lambda_g(\ell_2)\ell_2\right|\sum_{p_1\in\mathcal{P}}\sum_{p_2\in\mathcal{P}}\frac{1}{(p_1p_2)^2}\\
&\underset{\begin{subarray}{c} 0\neq |r_1|\ll R \\ (r_1,p_1)=1\end{subarray}}{\sum}\quad
\underset{\begin{subarray}{c} 0\neq |r_2|\ll R \\ (r_2,p_2)=1\end{subarray}}{\sum}
\underset{\begin{subarray}{c}
|n|\ll N^{\varepsilon}\frac{p_1p_2M}{\CN_0} \\ -\overline{r_1}\ell_1p_2 +\overline{r_2}\ell_2p_1+n\equiv0\bmod p_1p_2 \end{subarray}}{\sum} \left|\mathfrak{C}\right| + N^{-2018},
\end{split}
\end{equation*}
where
\begin{equation}\label{character sum}
\mathfrak{C}=\mathop{\sumstar_{\alpha\bmod M}}_{(\bar{\alpha}\ell_1p_2+n,M)=1}\bar{\chi}(r_1+\alpha)
\chi\bigg(r_2+\ell_2p_1(\overline{\overline{\alpha}\ell_1p_2+n})\bigg),
\end{equation}
and
\begin{equation*}
\begin{split}
R:=N^{\varepsilon}PM/N.
\end{split}
\end{equation*}

When $\ell_1\neq\ell_2$, we apply the Cauchy--Schwarz inequality to the $\ell_i$-sums to get rid of the Fourier coefficients $\lambda_g(\ell_i)$ by using the Ramanujan bound on average. Then,
\begin{equation*}
S^\star(N,\CN_0) \ll S_0^\star(N,\CN_0) + S_1^\star(N,\CN_0) + N^{-2018}
\end{equation*}
where
\begin{equation*}
S_0^\star(N,\CN_0) = \CN_0 \sum_{\ell\in\CL} |\lambda_g(\ell)|^2 \ell^2 \sum_{p_1\in\mathcal{P}}\sum_{p_2\in\mathcal{P}}\frac{1}{(p_1p_2)^2}\underset{\begin{subarray}{c} 0\neq |r_1|\ll R \\ (r_1,p_1)=1\end{subarray}}{\sum}\quad
\underset{\begin{subarray}{c} 0\neq |r_2|\ll R \\ (r_2,p_2)=1\end{subarray}}{\sum}
\underset{\begin{subarray}{c}
|n|\ll N^{\varepsilon}\frac{p_1p_2M}{\CN_0} \\ -\overline{r_1}\ell p_2 +\overline{r_2}\ell p_1+n\equiv0\bmod p_1p_2 \end{subarray}}{\sum} \left|\mathfrak{C}\right|,
\end{equation*}
and
\begin{equation*}
\begin{split}
S_1^\star(N, \mathcal{N}_0)= \CN_0 L^{1+\varepsilon}\bigg(\mathop{\sum_{\ell_1\in\CL} \sum_{\ell_2\in\CL}}_{\ell_1\neq \ell_2}\ell_1^2\ell_2^2 \bigg[&\sum_{p_1\in\mathcal{P}}\sum_{p_2\in\mathcal{P}}\frac{1}{(p_1p_2)^2}\\
&\times\underset{\begin{subarray}{c} 0\neq |r_1|\ll R \\ (r_1,p_1)=1\end{subarray}}{\sum}\quad
\underset{\begin{subarray}{c} 0\neq |r_2|\ll R \\ (r_2,p_2)=1\end{subarray}}{\sum}
\underset{\begin{subarray}{c}
|n|\ll N^{\varepsilon}\frac{p_1p_2M}{\CN_0} \\ -\overline{r_1}\ell_1p_2 +\overline{r_2}\ell_2p_1+n\equiv0\bmod p_1p_2 \end{subarray}}{\sum} \left|\mathfrak{C}\right|\bigg]^2\bigg)^{1/2}.
\end{split}
\end{equation*}


We will choose $P<M^{1-\delta-\varepsilon}$, so that $R<M$ and therefore $(r_1r_2,M)=1$. The remaining task is to count the number of points satisfying the congruence conditions and bounding the sums.

We divide our analysis into cases and write $S_0^\star(N,\CN_0)\ll \CN_0(\Delta_1 + \Delta_2)$ and $S_1^\star(N,\CN_0)\ll\CN_0 L^{\varepsilon}(\Sigma_1 + \Sigma_2)^{1/2}$. The contribution of the terms with $n\equiv 0\bmod{M}$ is given by $\Delta_1$ and $\Sigma_1$, and the contribution of the terms with $n\nequiv 0\bmod{M}$ is given by $\Delta_2$ and $\Sigma_2$, with $\Delta_i$ and $\Sigma_j$ appropriately defined. 

\subsection{$\underline{n\equiv 0\bmod{M}}$}
For the sum \eqref{character sum}, Lemma \ref{squareroot-2} (with the parameter $(\alpha,\beta)=(\ell_2p_1, \ell_1p_2)$ being applied) shows that 
\begin{equation*}
\begin{split}
\mathfrak{C}=\chi(\ell_2p_1\overline{\ell_2p_1})R_M(r_2-r_1\ell_2p_1\overline{\ell_1p_2})-\chi(r_2\overline{r_1})
= \begin{cases} O(M), & \quad \textit{if }  r_2\ell_1p_2\equiv r_1\ell_2 p_1 \bmod M\\
O(1), & \quad \mathrm{otherwise}.\\
\end{cases}
\end{split}\end{equation*}
According to $ r_2\ell_1p_2\equiv r_1\ell_2 p_1\bmod{M}$ or not, we write
\begin{equation*}
\begin{split}
\Delta_1=\Delta_{10}+\Delta_{11} \quad \text{ and } \quad \Sigma_1=\Sigma_{10}+\Sigma_{11},
\end{split}
\end{equation*}
where
\begin{equation*}
\Delta_{10} := \sum_{\ell\in\CL} |\lambda_g(\ell)|^2 \ell^2 \sum_{p_1\in\mathcal{P}}\sum_{p_2\in\mathcal{P}}\frac{1}{(p_1p_2)^2}\underset{r_2p_2\equiv r_1p_1\bmod M}{\underset{\begin{subarray}{c} 0\neq |r_1|\ll R \\ (r_1,p_1)=1\end{subarray}}{\sum}\quad
\underset{\begin{subarray}{c} 0\neq |r_2|\ll R \\ (r_2,p_2)=1\end{subarray}}{\sum}}
\underset{\begin{subarray}{c}
|n|\ll N^{\varepsilon}\frac{p_1p_2M}{\CN_0} \\ -\overline{r_1}\ell p_2 +\overline{r_2}\ell p_1+n\equiv0\bmod p_1p_2 \\ n\equiv 0\bmod M \end{subarray}}{\sum} M,
\end{equation*}
and
\begin{equation*}
\begin{split}
\Sigma_{10}:=L^2\underset{\ell_1\neq\ell_2}{\sum_{\ell_1\in\CL}\sum_{\ell_2\in\CL}} \ell_1^2 \ell_2^2\bigg[\sum_{p_1\in\mathcal{P}}\sum_{p_2\in\mathcal{P}}\frac{1}{(p_1p_2)^2} \underset{r_2\ell_1p_2\equiv r_1\ell_2 p_1\bmod{M}}{\underset{\begin{subarray}{c} 0\neq |r_1|\ll R \\ (r_1,p_1)=1\end{subarray}}{\sum}\quad
\underset{\begin{subarray}{c} 0\neq |r_2|\ll R \\ (r_2,p_2)=1\end{subarray}}{\sum}}
\quad\underset{\begin{subarray}{c}
|n|\ll N^{\varepsilon}\frac{p_1p_2M}{\CN_0} \\ -\overline{r_1}\ell_1p_2 +\overline{r_2}\ell_2p_1+n\equiv0\bmod p_1p_2\\ n\equiv0\bmod M \end{subarray}}{\sum}M\bigg]^2.
\end{split}
\end{equation*}
$\Delta_{11}$ and $\Sigma_{11}$ are the other pieces with the congruence condition $r_2\ell_1p_2\nequiv r_1\ell_2 p_1\bmod{M}$. Opening the square, we write $\Sigma_{10}$ and $\Sigma_{11}$ as a sum over $\ell_i, n, n', r_i, r_i', p_i, p_i'$ for $i=1, 2$. Then, under the assumption $P^2L<N^{1-\varepsilon}$, Lemmas \ref{B10} and \ref{B11} give
\begin{equation}\label{sigma1 bound}
\begin{split}
\Sigma_{10} \ll (PML)^{\varepsilon}\frac{L^6M^4}{P^4N^2} \quad \textit{ and } \quad \Sigma_{11}\ll (PML)^\varepsilon\frac{L^8M^4}{N^4P^4}\left(1+\frac{P^2}{\CN_0}\right)^2,\\
\Delta_{10} \ll (PML)^{\varepsilon}\frac{L^3M^2}{P^2N} \quad \textit{ and } \quad \Delta_{11}\ll (PML)^\varepsilon\frac{L^3M^2}{N^2P^2}\left(1+\frac{P^2}{\CN_0}\right).
\end{split}
\end{equation}

\subsection{$\underline{n\nequiv 0\bmod{M}}$}
Lemma \ref{squareroot-2} shows that 
\begin{equation*}
\mathfrak{C}= \begin{cases} O(M), & \quad \textit{if }  r_1-\bar{n}\ell_1p_2\equiv r_2+\bar{n}\ell_2p_1\equiv 0\bmod M\\
O(M^{1/2}), & \quad \mathrm{otherwise}.\\
\end{cases}
\end{equation*}
According to $r_1-\bar{n}\ell_1p_2\equiv r_2+\bar{n}\ell_2p_1\equiv 0\bmod M$ or not, we write
\begin{equation*}
\begin{split}
\Delta_2 = \Delta_{20} + \Delta_{21} \quad \text{ and } \quad \Sigma_2=\Sigma_{20}+\Sigma_{21},
\end{split}
\end{equation*}
where
\begin{equation*}
\Delta_{20} := \sum_{\ell\in\CL} |\lambda_g(\ell)|^2 \ell^2 \sum_{p_1\in\mathcal{P}}\sum_{p_2\in\mathcal{P}}\frac{1}{(p_1p_2)^2}{\underset{\begin{subarray}{c} 0\neq |r_1|\ll R \\ (r_1,p_1)=1\end{subarray}}{\sum}\quad
\underset{\begin{subarray}{c} 0\neq |r_2|\ll R \\ (r_2,p_2)=1\end{subarray}}{\sum}}
\underset{\begin{subarray}{c}
|n|\ll N^{\varepsilon}\frac{p_1p_2M}{\CN_0} \\ -\overline{r_1}\ell p_2 +\overline{r_2}\ell p_1+n\equiv0\bmod p_1p_2 \\ n\nequiv 0\bmod M \\ r_1-\bar{n}\ell p_2\equiv r_2+\bar{n}\ell p_1\equiv 0\bmod M \end{subarray}}{\sum} M,
\end{equation*}
and
\begin{equation*}
\begin{split}
\Sigma_{20}:= L^2 \underset{\ell_1\neq\ell_2}{\sum_{\ell_1\in\CL}\sum_{\ell_2\in\CL}} \ell_1^2 \ell_2^2\bigg[\sum_{p_1\in\mathcal{P}}\sum_{p_2\in\mathcal{P}}\frac{1}{(p_1p_2)^2} {\underset{\begin{subarray}{c} 0\neq |r_1|\ll R \\ (r_1,p_1)=1\end{subarray}}{\sum}\quad
\underset{\begin{subarray}{c} 0\neq |r_2|\ll R \\ (r_2,p_2)=1\end{subarray}}{\sum}}
\quad\underset{\begin{subarray}{c}
|n|\ll N^{\varepsilon}\frac{p_1p_2M}{\CN_0} \\ -\overline{r_1}\ell_1p_2 +\overline{r_2}\ell_2p_1+n\equiv0\bmod p_1p_2\\ n\nequiv0\bmod M \\ r_1-\bar{n}\ell_1p_2\equiv r_2+\bar{n}\ell_2p_1\equiv 0\bmod M \end{subarray}}{\sum}M\bigg]^2.
\end{split}
\end{equation*}
$\Delta_{21}$ and $\Sigma_{21}$ are the other pieces. Opening the square, we write $\Sigma_{20}$ and $\Sigma_{21}$ as a sum over $\ell_i, n, n', r_i, r_i', p_i, p_i'$ for $i=1, 2$.  Then, from Lemmas \ref{B20} and \ref{B21}
\begin{equation}\label{sigma2 bound}
\begin{split}
\Sigma_{20} \ll (PML)^\varepsilon\frac{L^6M^4}{P^4N^2} \quad \textit{ and } \quad \Sigma_{21}\ll (PML)^\varepsilon\frac{L^8M^5}{N^4P^4}\left(1+\frac{P^2M}{\CN_0}\right)^2,\\
\Delta_{20} \ll (PML)^\varepsilon\frac{L^3M^2}{P^2N} \quad \textit{ and } \quad \Delta_{21}\ll (PML)^\varepsilon\frac{L^3M^{5/2}}{N^2P^2}\left(1+\frac{P^2M}{\CN_0}\right).
\end{split}
\end{equation}

\subsection{Conclusion}
The bounds \eqref{sigma1 bound} and \eqref{sigma2 bound} imply
\begin{equation*}
\begin{split}
S^\star(N,\CN_0)\ll (PML)^{\varepsilon}\CN_0\bigg[\frac{L^3M^2}{P^2N} + \frac{L^4M^{5/2}}{N^2P^2}\bigg(1+\frac{P^2M}{\CN_0}\bigg)\bigg].
\end{split}
\end{equation*}
Inserting this into \eqref{S star before}, 
\begin{equation*}
S^\star(N)\ll (PML)^{\varepsilon}\bigg[\frac{N^{1/2}M^{1/2}v^2}{L^{1/2}} + M^{3/4} + N^{1/2}L^{1/2}M^{1/4}v\bigg].
\end{equation*}
Recalling that $v=M^{\varepsilon}$ \eqref{size-of-v}, and inserting the above into \eqref{S(N) and S(N) star},
\begin{equation*}
\frac{S(N)}{N^{1/2}}\ll (PML)^{\varepsilon}\bigg[\frac{N^{1/2}L}{PM^{1/2}} + \frac{M}{PN^{1/2}} + \frac{M^{1/2}}{L^{1/2}}  + \frac{M^{3/4}}{N^{1/2}} + L^{1/2}M^{1/4}\bigg].
\end{equation*}
The first term is small. Comparing the rest of the terms and the trivial bound of $N^{1/2}$, the optimal choices of parameters turn out to be $P=M^{1/4+\varepsilon}$ and $L=P^{1-\varepsilon}$. Therefore the conditions in \eqref{restriction 1}, \eqref{c=p} and \eqref{restriction 3} are satisfied. In that case,
\begin{equation*}
\frac{S(N)}{N^{1/2}}\ll M^{\varepsilon}\left(\frac{M^{3/4}}{N^{1/2}} + M^{3/8}\right).
\end{equation*}
It therefore makes sense to take $N>M^{3/4}$. For $N\ll M^{3/4}$, we use the trivial bound of $N^{1/2}$. We obtain
\begin{equation*}
L\left(\frac{1}{2},g\otimes \chi \right)\ll M^{3/8+\varepsilon}.
\end{equation*}
That proves Theorem \ref{cusp form thm} and Theorem \ref{divisor thm}.

\appendix

\section{Shifted character sums and counting lemmas}\label{counting and bounding}

For this section, let $M>3$ be a prime and define
\begin{eqnarray*}
\mathfrak{K}=\sum_{z\in \mathbb{F}_M^{\times}}\bar{\chi}(r+\ell z)
e\left(\frac{n\overline{z}}{M}\right),\qquad (\ell,M)=1,\quad n,r,\ell\in \mathbb{Z}.
\end{eqnarray*}

\begin{Lemma}\label{squareroot 1}
Suppose that $(r,M)=1$. If $M|n$, then
$
\mathfrak{K}=-\bar{\chi}(r).
$
If $M\nmid n$, then
\begin{eqnarray*}
\mathfrak{K}\ll M^{1/2}.
\end{eqnarray*}
\end{Lemma}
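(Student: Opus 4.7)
The proof naturally splits into the two cases indicated.

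For the case $M \mid n$, the plan is straightforward: the additive phase $e(n\overline{z}/M)$ equals $1$ identically, so the sum collapses to
\[
\mathfrak{K}=\sum_{z\in \mathbb{F}_M^{\times}}\overline{\chi}(r+\ell z).
\]
Since $(\ell,M)=1$, the substitution $y=r+\ell z$ sets up a bijection from $\mathbb{F}_M^{\times}$ onto $\mathbb{F}_M\setminus\{r\}$, giving $\mathfrak{K}=\sum_{y\in \mathbb{F}_M}\overline{\chi}(y)-\overline{\chi}(r)=-\overline{\chi}(r)$, where in the last step I would use the orthogonality relation $\sum_{y\in \mathbb{F}_M}\overline{\chi}(y)=0$ for a non-trivial character modulo the prime $M$.

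For the case $M\nmid n$, the plan is to put $\mathfrak{K}$ into a standard Weil-type mixed character sum and then quote the Weil bound. I would substitute $w=\overline{z}$, which runs again over $\mathbb{F}_M^{\times}$, so that
\[
\mathfrak{K}=\sum_{w\in \mathbb{F}_M^{\times}}\overline{\chi}(r+\ell \overline{w})\,e\!\left(\frac{nw}{M}\right).
\]
Using the identity $r+\ell \overline{w}=\overline{w}(rw+\ell)$ together with $\overline{\chi}(\overline{w})=\chi(w)$, this rewrites as
\[
\mathfrak{K}=\sum_{w\in \mathbb{F}_M^{\times}}\chi(w)\,\overline{\chi}(rw+\ell)\,e\!\left(\frac{nw}{M}\right).
\]
Because $(r,M)=(\ell,M)=1$, the two linear factors $w$ and $rw+\ell$ are coprime over $\mathbb{F}_M$, and since $M\nmid n$ the additive character is non-trivial. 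I would then invoke the Weil bound for mixed multiplicative/additive character sums of this shape (as stated, for instance, in Iwaniec--Kowalski, Ch.~11), which yields $\mathfrak{K}\ll M^{1/2}$ with an absolute implied constant.

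There is no genuine obstacle: the first case is elementary, and the second is a direct application of the Weil bound after an essentially cosmetic substitution. The only care needed is verifying the non-degeneracy hypotheses (two distinct non-trivial multiplicative characters of coprime linear polynomials, plus a non-trivial additive phase), all of which follow from the standing hypotheses $(r,M)=(\ell,M)=1$ and $M\nmid n$.
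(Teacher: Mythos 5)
Your proof is correct, and it takes a genuinely different (and in fact more elementary) route than the paper. For $M\mid n$ your orthogonality computation pins down the exact value $-\overline{\chi}(r)$; the paper just calls this step ``trivial,'' so there is nothing to compare. For $M\nmid n$, the paper first applies the Fourier expansion $\overline{\chi}(a)=g_\chi^{-1}\sum_{y}\chi(y)e(ay/M)$ to blow the sum up into the two-variable exponential sum $g_\chi^{-1}\sum_{y,z\in\BF_M^\times}\chi(y)e\bigl((ry+\ell yz+n\overline z)/M\bigr)$ and then quotes the Adolphson--Sperber square-root cancellation result for nondegenerate toric exponential sums; this is the same machinery they use for the harder Lemma~\ref{squareroot 2}, so it keeps the treatment uniform. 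You instead stay in one variable: after $w=\overline z$ and the factorization $r+\ell\overline w=\overline w(rw+\ell)$ you get the single-variable mixed sum $\sum_{w\in\BF_M^\times}\chi\bigl(w/(rw+\ell)\bigr)e(nw/M)$, and the classical Weil bound for $\sum_x\chi(f(x))\psi(g(x))$ with $f$ a rational function applies since $f(w)=w/(rw+\ell)$ has a simple zero and a simple pole (so it is not a $d$-th power for any $d>1$) and $g(w)=nw$ gives a non-trivial additive character when $M\nmid n$. This yields the same $O(M^{1/2})$ bound with an absolute implied constant, using a standard one-variable Weil estimate rather than the heavier multi-variable Adolphson--Sperber theorem. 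Both proofs are valid; yours is lighter here, while the paper's choice has the advantage of reusing the same tool for Lemma~\ref{squareroot 2}, where a one-variable reduction is not available.
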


\begin{proof}
For $M|n$, trivial.
If $M\nmid  n$, by the Fourier expansion of $\chi$ in
terms of additive characters
\begin{eqnarray}\label{Gauss sum}
\chi(a)=g_{\bar{\chi}}^{-1}\sum_{y\bmod M}\bar{\chi}(y)e\left(\frac{a y}{M}\right),
\end{eqnarray}
we have
\begin{eqnarray*}
\mathfrak{K}=g_{\chi}^{-1}\sum_{y,z\in \mathbb{F}_M^{\times}}\chi(y)
e\left(\frac{ry+\ell yz+n\overline{z}}{M}\right).
\end{eqnarray*}
Then the bound follows from  \cite[Corollary 4.3]{Adolphson-Sperber}.
\end{proof}

We define
\begin{equation*}
\mathfrak{C}=\mathop{\sum_{z\in \mathbb{F}_M^{\times}}}_{ (n+\beta\overline{z},M)=1}\bar{\chi}(r_1+z)
\chi\bigg(r_2+\alpha(\overline{n+\beta\overline{z}})\bigg), \qquad (\alpha\beta,M)=1,\quad n,r_1,r_2,\alpha,\beta\in \mathbb{Z}.
\end{equation*}

\begin{Lemma}\label{squareroot-2}
Suppose that $(r_1r_2,M)=1$. If $M|n$, we have
\begin{eqnarray*}
\mathfrak{C}=\chi(\alpha\overline{\beta})
R_M(r_2-r_1\alpha\overline{\beta})-\chi(r_2\overline{r_1}),
\end{eqnarray*}
where $R_M(a)=\sum_{z\in \mathbb{F}_M^{\times}}e(az/M)$ is the Ramanujan sum.
If $M\nmid  n$ and at least one of $r_1-\overline{n}\beta$ and $r_2+\overline{n}\alpha$ is
nonzero in $\mathbb{F}_M$, then
\begin{equation*}
\mathfrak{C}\ll M^{1/2}.
\end{equation*}
Finally, if $n\neq0$ and $r_1-\overline{n}\beta=r_2+\overline{n}\alpha=0$ in $\BF_M$, then 
\[
 \mathfrak{C}=
 \begin{cases}
 -\chi(nr_2\overline{\beta}) \quad &\text{ if } \chi \text{ is not a quadratic character}, \\
 \chi(\overline{n}r_2\beta)(M-1) \quad &\text{ if } \chi \text{ is a quadratic character}.
 \end{cases}
\]
\end{Lemma}

\begin{proof}
For $M|n$,
\begin{eqnarray*}
\mathfrak{C}
=\sum_{z\in \mathbb{F}_M}\bar{\chi}(r_1+z)
\chi(r_2+\alpha\overline{\beta}z)-\chi(r_2\overline{r_1}).
\end{eqnarray*}
Then the first statement follows from \eqref{Gauss sum}. If $M\nmid  n$, by making change of variables $z\rightarrow \overline{n}\beta z$ and $z+1\rightarrow z$, we have
\begin{eqnarray*}
\mathfrak{C}
=\sum_{z\in \mathbb{F}_M^{\times}}
\bar{\chi}(r_1+\overline{n}\beta (z-1))
\chi(r_2+\overline{n}\alpha(1-\overline{z}))-\chi(r_2\overline{r_1}).
\end{eqnarray*}
Applying \eqref{Gauss sum} again we get
\begin{eqnarray*}
\mathfrak{C}
=g_{\chi}^{-1}g_{\bar{\chi}}^{-1}\sum_{x,y,z\in \mathbb{F}_M^{\times}}\chi(x)\bar{\chi}(y)
e\left(\frac{(r_1-\overline{n}\beta)x+
(r_2+\overline{n}\alpha)y+\overline{n}\beta xz-\overline{n} \alpha y\overline{z}}{M}\right)
-\chi(r_2\overline{r_1}).
\end{eqnarray*}
Consider the Newton polyhedron $\Delta(f)$ of
\begin{eqnarray*}
f(x,y,z)=(r_1-\overline{n}\beta)x+
(r_2+\overline{n}\alpha)y+\overline{n}\beta xz-\overline{n} \alpha yz^{-1}\in
\mathbb{F}_M^{\times}[x,y,z,(x y z)^{-1}].
\end{eqnarray*}
We separate into two cases.
\begin{enumerate}
\item If $r_1-\overline{n}\beta=0$ or $r_2+\overline{n}\alpha=0$ in $\mathbb{F}_M$,
then $\Delta(f)$ is the tetrahedron
in $\mathbb{R}^3$ with vertices $(0,0,0)$, $(0,1,0)$, $(1,0,1)$, $(0,1,-1)$
or $(0,0,0)$, $(1,0,0)$, $(1,0,1)$, $(0,1,-1)$. It is easy to check that
$f$ is nondegenerate with respect to $\Delta(f)$. By \cite{Adolphson-Sperber} (see also \cite{Fu1}), we have
$
\mathfrak{C}\ll  M^{1/2}.
$

\item If both $r_1-\overline{n}\beta$ and $r_2+\overline{n}\alpha$ are nonzero in
$\mathbb{F}_M$, then $\Delta(f)$ is the pentahedron
in $\mathbb{R}^3$ with vertices $(0,0,0)$, $(1,0,0)$, $(0,1,0)$,  $(1,0,1)$ and $(0,1,-1)$.
The only face which fails to meet the criterion
for nondegeneracy is the quadrilateral one with polynomial
$
f_{\sigma}(x,y,z)=(r_1-\overline{n}\beta)x+
(r_2+\overline{n}\alpha)y+\overline{n}\beta xz-\overline{n} \alpha yz^{-1}
$
for which the locus of
$
\partial f_{\sigma}/\partial x=\partial f_{\sigma}/\partial y=
\partial f_{\sigma}/\partial z=0
$
is empty in $(\mathbb{F}_M^{\times})^3$ only if
$n\not\equiv \beta \overline{r_1}-\alpha \overline{r_2}\bmod M$.
Therefore, for $n\not\equiv \beta \overline{r_1}-\alpha \overline{r_2}\bmod M$,
we can apply the square-root cancellation result in \cite{Adolphson-Sperber} or \cite{Fu1} to get
$\mathfrak{C}\ll M^{1/2}$.
\end{enumerate}

For $n\equiv \beta \overline{r_1}-\alpha \overline{r_2}\bmod M$, we have
$
r_1-\overline{n}\beta\equiv -\alpha r_1\overline{nr_2}\bmod M$ and $r_2+\overline{n}\alpha \equiv \beta r_2\overline{nr_1}\bmod M.$
By changing variables $x\rightarrow x\overline{z}$ and $\overline{n}x\rightarrow x$,
$\overline{n}y\rightarrow y$
we obtain
\begin{equation*}
\begin{split}
\mathfrak{C}
=&g_{\chi}^{-1}g_{\bar{\chi}}^{-1}\sum_{x,y,z\in \mathbb{F}_M^{\times}}\chi(x)\bar{\chi}(y)\bar{\chi}(z)
e\left(\frac{ (r_1\overline{r_2}x+y)(\beta r_2\overline{r_1}-\alpha\overline{z})}{M}\right)
-\chi(r_2\overline{r_1}).
\end{split}
\end{equation*}
Since $\chi$ is primitive, the sums over $x$ and $y$ vanishes if
$z\equiv \alpha r_1\overline{\beta r_2}\bmod M$. Thus
we can make change of variables $(\beta r_2\overline{r_1}-\alpha\overline{z})x\rightarrow x$
and $(\beta r_2\overline{r_1}-\alpha\overline{z})y\rightarrow y$ to get
\begin{equation*}
\begin{split}
\mathfrak{C}
=&g_{\chi}^{-1}g_{\bar{\chi}}^{-1}\mathop{\sum_{z\in \mathbb{F}_M^{\times}}}_{z\not\equiv \alpha r_1\overline{\beta r_2}\bmod M} \bar{\chi}(z)
\sum_{x,y\in \mathbb{F}_M^{\times}}
\chi(x)\bar{\chi}(y)
e\left(\frac{ r_1\overline{r_2}x+y}{M}\right)
-\chi(r_2\overline{r_1})\\
=&-\chi^2(r_2\overline{r_1})\chi(\beta\overline{\alpha})-\chi(r_2\overline{r_1}).
\end{split}
\end{equation*}

Finally, if $n\neq0$ and $r_1-\overline{n}\beta=r_2+\overline{n}\alpha=0$ in $\BF_M$,
\begin{equation*}
\begin{split}
\mathfrak{C} &= \chi(n)\sum_{z\in \BF_M^\times}\bar{\chi}(r_1+z)\bar{\chi}(n+\beta\overline{z})\chi(r_2+\overline{n}\alpha+r_2\beta\overline{zn})\\
&= \chi(r_2\beta)\sum_{z\in\BF_M^\times}\bar{\chi}(r_1+z)\bar{\chi}(nz+\beta) =\chi(r_2\beta)\underset{\begin{subarray}{c}
z\in\BF_M \\ z\neq \overline{n}\beta
\end{subarray}}{\sum}\bar{\chi}(r_1-\overline{n}\beta+z)\bar{\chi}(nz)\\
&= \chi(\overline{n}r_2\beta)\underset{\begin{subarray}{c}
z\in\BF_M \\ z\neq \overline{n}\beta
\end{subarray}}{\sum}\bar{\chi}^2(z).
\end{split}
\end{equation*}
If $\chi$ is not a quadratic character, then by orthogonality of characters,  $\mathfrak{C}= -\chi(nr_2\overline{\beta})$. If $\chi$ is a quadratic character, then $\mathfrak{C}=\chi(\overline{n}r_2\beta)(M-1)$.
\end{proof}

\begin{Lemma}$\Sigma_{10}\ll (PML)^{\varepsilon}\frac{L^6M^4}{P^4N^2}$ and $\Delta_{10}\ll (PML)^{\varepsilon}\frac{L^3M^2}{P^2N}$. \label{B10}
\end{Lemma}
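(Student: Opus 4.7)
The plan is to exploit the ``bad-case'' condition $r_2\ell_1 p_2\equiv r_1\ell_2 p_1\bmod M$ arising in Lemma~\ref{squareroot 2} by noting that both sides are integers of size at most $LRP=LP^2M/N$. Under a restriction of the form $LP^2\ll N$ (respectively $P^2\ll N$ when $\ell_1=\ell_2$), the absolute difference is less than $M$, and the congruence lifts to an honest equality in $\BZ$. This lift is far stronger than a mod-$M$ condition and kills an extra factor of $P$ (respectively $LP$) in the counting, which is precisely what the target bounds require.

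For $\Delta_{10}$ the equality becomes $r_1 p_1=r_2 p_2$, which I split into two cases. If $p_1=p_2=p$, then $r_1=r_2$, contributing $\ll PR$ quadruples $(p_1,p_2,r_1,r_2)$. If $p_1\neq p_2$, primality forces $r_1=p_2 s$ and $r_2=p_1 s$ with $0<|s|\leq R/P$, again $\ll P^2\cdot(R/P)=PR$ quadruples. The $n$-sum, after CRT combines $n\equiv 0\bmod M$ with the class of $n$ modulo $p_1p_2$ dictated by $\ell,r_1,r_2$, places $n$ in a single residue class modulo $p_1p_2 M\asymp P^2M$ within a range of length $\ll N^\varepsilon P^2M/\CN_0$, giving $\ll N^\varepsilon$ admissible values (using $\CN_0\geq 1$). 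Combining $\sum_\ell\ell^2|\lambda_g(\ell)|^2\ll L^{3+\varepsilon}$ from Lemma~\ref{Ramanujan}, the weight $(p_1p_2)^{-2}\asymp P^{-4}$, and the bound $|\mathfrak{C}|\ll M$ yields
\[
\Delta_{10}\ll L^{3+\varepsilon}\cdot P^{-4}\cdot PR\cdot N^\varepsilon\cdot M\ll(PML)^\varepsilon\,\frac{L^3 MR}{P^3}=(PML)^\varepsilon\,\frac{L^3 M^2}{P^2N},
\]
where the last equality uses $R=PM/N$.

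For $\Sigma_{10}$ the equation $r_1\ell_2 p_1=r_2\ell_1 p_2$, combined with $\ell_1\neq\ell_2$ in $\CL$ and $\CL\cap\CP=\emptyset$ (so each $\ell_i$ is coprime to each $p_j$), resolves as follows. If $p_1=p_2$, then $r_1\ell_2=r_2\ell_1$ forces $r_i=\ell_i s$ with $0<|s|\leq R/L$, contributing $\ll PR/L$. If $p_1\neq p_2$, divisibility forces $\ell_1 p_2\mid r_1$ and $\ell_2 p_1\mid r_2$, hence $r_1=\ell_1 p_2 t$, $r_2=\ell_2 p_1 t$ with $0<|t|\leq R/(LP)$ and a contribution $\ll P^2\cdot R/(LP)=PR/L$. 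With the $n$-count again $\ll N^\varepsilon$ and $|\mathfrak{C}|\ll M$, the inner bracket satisfies $|A(\ell_1,\ell_2)|\ll N^\varepsilon MR/(LP^3)$. Squaring, using $\sum_{\ell_1\neq\ell_2\in\CL}\ell_1^2\ell_2^2\ll L^6$, and including the outer $L^2$ factor gives
\[
\Sigma_{10}\ll L^2\cdot L^6\cdot\frac{M^2R^2}{L^2P^6}\cdot N^{2\varepsilon}=\frac{L^6M^2R^2}{P^6}\cdot N^{2\varepsilon}=(PML)^\varepsilon\,\frac{L^6M^4}{P^4N^2}.
\]

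The main obstacle I anticipate is confirming that the size restrictions $P^2\ll N$ and $LP^2\ll N$ are indeed in force throughout. At the final Burgess optimum $P=M^{1/4+\varepsilon}$, $L=P^{1-\varepsilon}$, $N\asymp M^{3/4}$, one has $LP^2$ matching $N$ up to arbitrarily small powers of $M$, so the lift from congruence to equality is borderline. If instead the restrictions hold only up to a bounded factor $k$, then $r_1\ell_2 p_1-r_2\ell_1 p_2=jM$ can occur for $|j|\leq k$, and the counts inflate by $O(k)$, still absorbable into $(PML)^\varepsilon$ provided $k\ll (PML)^\varepsilon$. The remaining bookkeeping (the conditions $(r_i,p_i)=1$ translating to mild coprimality conditions on $s$ or $t$, and the CRT-based $n$-count) is routine.
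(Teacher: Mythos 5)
Your proof is correct and follows the same core strategy as the paper: use the size restriction $P^2L\ll N^{1-\varepsilon}$ (the paper's restriction (4.8)) to lift the congruence $r_2\ell_1p_2\equiv r_1\ell_2p_1\bmod M$ to an equality over $\BZ$, then count solutions, using the bounds $|\mathfrak{C}|\ll M$ and the fact that the $n$-count is $O(N^\varepsilon)$ once the congruence conditions are combined modulo $p_1p_2M$. The only real difference is bookkeeping: the paper opens the square over both primed and unprimed variables and counts by first fixing $(\ell_1,p_2,r_2)$ to determine $(\ell_2,p_1,r_1)$, then using $\ell_2\mid r_2'$ in the primed tuple; you instead keep $(\ell_1,\ell_2)$ fixed outside the square and directly parametrize the solution set of $r_1\ell_2 p_1=r_2\ell_1 p_2$ in the cases $p_1=p_2$ and $p_1\neq p_2$, which is a slightly cleaner and more transparent way to arrive at the same $\ll PR/L$ count per $(\ell_1,\ell_2)$. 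Your concluding concern about the tightness of $P^2L\ll N$ at the Burgess optimum is a reasonable sanity check but not a gap: the paper imposes this explicitly as restriction (4.8), and at the chosen parameters $P=M^{1/4+\varepsilon}$, $L=P^{1-\varepsilon}$, with $N$ restricted to $N\gg M^{3/4}$ (smaller $N$ handled trivially), the restriction holds with a positive power of $M$ to spare provided the $\varepsilon$'s are chosen consistently, so no inflation by a factor $k$ is needed.
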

\begin{proof}
We recall $|r_2|<R:=N^{\varepsilon}PM/N$.
Suppose
\begin{equation}\label{restriction 3}
\begin{split}
P^2L\ll N^{1-\varepsilon}.
\end{split}
\end{equation}
Then the congruence $r_2\ell_1p_2\equiv r_1\ell_2 p_1\bmod{M}$ implies that $r_2\ell_1p_2= r_1\ell_2 p_1$. Similarly, $r_2'\ell_1p_2'= r_1'\ell_2 p_1'$. Therefore fixing $\ell_1,p_2,r_2$ fixes $\ell_2,p_1,r_1$ up to factors of $\log M$. If $\ell_1\neq\ell_2$, then the equality $r_2'\ell_1p_2'= r_1'\ell_2 p_1'$ implies $\ell_2|r_2'$. That saves a factor of $L$ in $r_2'$ sum. Further, for fixed $p_2',r_2'$, there are only $\log M$ many $p_1', r_1'$. In the case $\ell_1=\ell_2$, the previous identities become $r_2p_2=r_1p_1$. Therefore fixing $r_2, p_2$ fixes $r_1, p_1$ up to factors of $\log M$.

Finally, the congruence conditions on $r_1, r_2$ and $n$ can be combined to write
\begin{equation*}
\begin{split}
-\overline{r_1}\ell_1 p_2+\overline{r_2}\ell_2p_1+n\equiv 0\bmod p_1p_2M.
\end{split}
\end{equation*}
From \eqref{new-n-sum}, the $n$ satisfies $|n|\ll N^{1+\varepsilon}L/M$, which is smaller than the size of the modulus $p_1p_2M$. Therefore for fixed $r_i, \ell_i, p_i$, the $n$ sum is at most singleton. Similarly, $n'$ is at most a singleton. Therefore up to a factor of $(PML)^\varepsilon$,
\begin{equation*}
\begin{split}
\Sigma_{10}\ll L^5L^2\frac{1}{P^3}R\frac{1}{P^3}\frac{R}{L}M^2\ll \frac{L^6M^4}{P^4N^2}, \quad \text{ and } \quad \Delta_{10}\ll L^3\frac{1}{P^3}RM \ll \frac{L^3M^2}{P^2N}.
\end{split}
\end{equation*}

\end{proof}

\begin{Lemma}$\Sigma_{11}\ll (PML)^\varepsilon\frac{L^8M^4}{N^4P^4}\left(1+\frac{P^2}{\CN_0}\right)^2$ and $\Delta_{11}\ll (PML)^\varepsilon\frac{L^3M^2}{N^2P^2}\left(1+\frac{P^2}{\CN_0}\right)$. \label{B11}
\end{Lemma}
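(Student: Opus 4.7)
The plan is to use Lemma \ref{squareroot 2} which gives $|\mathfrak{C}|=O(1)$ in the regime defining $\Delta_{11},\Sigma_{11}$ (the off-diagonal piece of the $n\equiv 0\bmod M$ case), reducing the task to counting lattice points $(n,r_1,r_2,\ell,p_1,p_2)$ — respectively $(n,r_1,r_2,\ell_1,\ell_2,p_1,p_2)$ after opening the square — that satisfy the listed congruences.

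For $\Delta_{11}$, I fix $\ell,p_1,p_2$ and count $(n,r_1,r_2)$ as follows. The condition $n\equiv0\bmod M$ together with $|n|\ll N^\varepsilon p_1p_2M/\CN_0$ yields $\ll 1+P^2/\CN_0$ values of $n$. For each such $n$, the congruence
\[
-\overline{r_1}\ell p_2+\overline{r_2}\ell p_1+n\equiv 0\bmod p_1p_2
\]
decouples by CRT: modulo $p_1$ it reads $n\equiv \overline{r_1}\ell p_2\bmod p_1$, fixing $r_1\bmod p_1$, and modulo $p_2$ it fixes $r_2\bmod p_2$. Combined with $|r_i|\ll R=N^\varepsilon PM/N$, each $r_i$ has $\ll 1+R/P\ll N^\varepsilon M/N$ choices. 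Summing over $\ell$ using $\sum_{\ell\in\CL}|\lambda_g(\ell)|^2\ell^2\ll L^3$, and over $p_1,p_2$ with the weight $1/(p_1p_2)^2\asymp 1/P^4$ (which contributes $P^2/P^4=1/P^2$), gives
\[
\Delta_{11}\ll (PML)^\varepsilon\,L^3\cdot\frac{1}{P^2}\cdot\left(\frac{M}{N}\right)^{\!2}\!\left(1+\frac{P^2}{\CN_0}\right)\ll (PML)^\varepsilon\frac{L^3M^2}{N^2P^2}\left(1+\frac{P^2}{\CN_0}\right).
\]

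For $\Sigma_{11}$, I open the square to write it as a sum over $(\ell_1,\ell_2,p_1,p_2,p_1',p_2',r_1,r_2,r_1',r_2',n,n')$, and apply the very same counting to each of the two bracket factors separately (the constraint $r_2\ell_1 p_2\not\equiv r_1\ell_2 p_1\bmod M$ is only used via $|\mathfrak{C}|=O(1)$, not for a saving). Exactly as above, the inner bracket in the definition of $\Sigma_{11}$ is bounded by
\[
\sum_{p_1,p_2\in\CP}\frac{1}{(p_1p_2)^2}\!\sum_{\substack{r_1,r_2\\n\equiv0\bmod M}}\!\!1\ \ll\ (PML)^\varepsilon\,\frac{M^2}{P^2N^2}\left(1+\frac{P^2}{\CN_0}\right).
\]
Squaring this and multiplying by the outer $L^2\sum_{\ell_1\neq\ell_2}\ell_1^2\ell_2^2\ll L^8$ yields
\[
\Sigma_{11}\ll (PML)^\varepsilon\,L^8\cdot\frac{M^4}{P^4N^4}\left(1+\frac{P^2}{\CN_0}\right)^{\!2}=(PML)^\varepsilon\frac{L^8M^4}{N^4P^4}\left(1+\frac{P^2}{\CN_0}\right)^{\!2}.
\]

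There is no real obstacle here; the only point requiring care is verifying that the CRT split of the mod $p_1p_2$ congruence really does separately pin down $r_1\bmod p_1$ and $r_2\bmod p_2$ (which uses $p_1,p_2\in\CP$ distinct primes and the fact that $\ell,p_1,p_2$ are coprime to each modulus as needed), and keeping $N^\varepsilon$ factors organized so the bounds match the claimed shape. Together with Lemma \ref{squareroot 2} and the trivial point count, both claimed inequalities follow directly.
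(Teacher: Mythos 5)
Your treatment of the off‑diagonal case $p_1\neq p_2$ matches the paper's and is correct, but there is a genuine gap: you never handle the diagonal $p_1=p_2$, and this case cannot be absorbed into your argument as written. When $p_1=p_2=p$ the congruence
\[
-\overline{r_1}\ell_1 p_2+\overline{r_2}\ell_2 p_1+n\equiv 0\pmod{p_1p_2}
\]
reduces modulo $p$ to $n\equiv 0\pmod p$ and imposes \emph{no} condition on $r_1$ or $r_2$ modulo $p$ — the CRT decoupling you invoke (and explicitly acknowledge as needing $p_1\neq p_2$) simply does not apply. A naive count of the diagonal contribution to $\Delta_{11}$ then gives on the order of $L^3\cdot\frac{1}{P^3}\,R^2\bigl(1+\tfrac{P}{\CN_0}\bigr)$, which exceeds the claimed bound $L^3\frac{R^2}{P^4}\bigl(1+\tfrac{P^2}{\CN_0}\bigr)$ by up to a factor of $P$ (e.g.\ when $\CN_0\gg P^2$, which is permitted by \eqref{N0 bound}). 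The same defect appears in $\Sigma_{11}$ and in the primed variables.

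The paper closes this hole with an extra idea that your write‑up omits. On the diagonal one has both $p\mid n$ and $M\mid n$; imposing \eqref{restriction 4}, i.e.\ $P<M^{1/2-\varepsilon}$, together with the lower bound on $\CN_0$ from \eqref{N0 bound}, forces $|n|\ll P^2M/\CN_0\ll NM^{-\gamma}<PM$, hence $n=0$. Dividing the original congruence by $p$ then yields $r_1\ell_2\equiv r_2\ell_1\pmod p$, which pins down $r_2\bmod p$ once $r_1,\ell_1,\ell_2$ are fixed, giving the diagonal term $\frac{1}{P^3}\cdot R\cdot\frac{R}{P}=\frac{R^2}{P^4}$, which is dominated by the off‑diagonal contribution. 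Without this step your proof does not establish the lemma; you should add the $p_1=p_2$ (and $p_1'=p_2'$) analysis, invoking \eqref{restriction 4} and \eqref{N0 bound} as above.
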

\begin{proof} We let the variables of summation $\ell_i, p_i, r_i, n$ to be the same as before. The expressions for $\Delta_{11}$ and $\Sigma_{11}$ are the same as $\Delta_{10}$ and $\Sigma_{10}$ with the condition $r_2\ell_1p_2\equiv r_1\ell_2p_1\bmod M$ replaced by $r_2\ell_1p_2\not\equiv r_1\ell_2p_1\bmod M$.
First let $\ell_1\neq\ell_2$. If $p_1\neq p_2$, then $(n,p_1p_2)=1$, $r_1\equiv \overline{nM}\ell_1p_2\bmod{p_1}$ and $r_2\equiv - \overline{nM} \ell_2 p_1 \bmod{p_2}$. Since $R\gg P$, these congruence conditions therefore save a factor of $O(P)$ in each $r_i$-sum. Similarly we save a factor of $O(P)$ in each $r_i'$ sum. The congruence $n\equiv0\bmod M$ (resp $n'\equiv0\bmod M$) saves a factor of at most $M$ in the $n$-sum (resp $n'$-sum). If $p_1=p_2=p$, then the congruence conditions imply $p|n$. We already have $M|n$. 
Recall from \eqref{new-n-sum} the $n$-sum satisfies $|n|\ll N^{1+\varepsilon}L/M$, which is smaller than $pM$ by our choice of $P$ and $L$. Hence we have $n=0$. The remaining congruence condition $r_1\ell_2\equiv r_2\ell_1\bmod p$ shows that fixing $r_1,\ell_2,\ell_1$ saves a factor of $O(P)$ in the $r_2$-sum. The exact same savings follow for $n'$ and $r_2'$ sums. Also, the exact same analysis as done for $p_i,r_i,n$-sums follows for the case $\ell_1=\ell_2$.  Therefore up to a factor of $(PML)^\varepsilon$,
\begin{equation*}
\begin{split}
\Sigma_{11}&\ll L^8\bigg[\frac{1}{P^2}\frac{R^2}{P^2}\left(1+\frac{P^2}{\CN_0}\right)\bigg]^2 + L^8\bigg[\frac{1}{P^3}\frac{R^2}{P}\bigg]^2 \ll \frac{L^8M^4}{N^4P^4}\left(1+\frac{P^2}{\CN_0}\right)^2,\\
\Delta_{11} &\ll L^3\bigg[\frac{1}{P^2}\frac{R^2}{P^2}\left(1+\frac{P^2}{\CN_0}\right)\bigg] + L^3\bigg[\frac{1}{P^3}\frac{R^2}{P}\bigg] \ll \frac{L^3M^2}{N^2P^2}\left(1+\frac{P^2}{\CN_0}\right).
\end{split}
\end{equation*}
\end{proof}

\begin{Lemma}$\Sigma_{20}\ll (PML)^\varepsilon\frac{L^6M^4}{P^4N^2}$ and $\Delta_{20}\ll (PML)^\varepsilon\frac{L^3M^2}{P^2N}$. \label{B20}
\end{Lemma}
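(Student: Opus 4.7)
The plan is to follow the proof of Lemma \ref{B10} closely. Although the mod-$M$ constraints in $\Sigma_{20}$ and $\Delta_{20}$ pin down $r_1$ and $r_2$ individually (rather than as a product as in B10), they combine into a structural equation of the same shape, and the same counting then applies.

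First I would derive the structural identity: from $r_1\equiv\bar n\ell_1 p_2\bmod M$ and $r_2\equiv-\bar n\ell_2 p_1\bmod M$, multiplying the first congruence by $\ell_2 p_1$, the second by $\ell_1 p_2$, and adding gives
\begin{equation*}
r_2\ell_1 p_2+r_1\ell_2 p_1\equiv 0\bmod M.
\end{equation*}
Both sides are bounded in absolute value by $RLP=P^2LM/N$, which is $\ll M^{1-\varepsilon}$ under restriction \eqref{restriction 3}. Hence the congruence collapses to the integer equality $r_2\ell_1 p_2=-r_1\ell_2 p_1$, which is precisely the B10 shape up to sign.

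From here the combinatorics mirrors B10. For $\Sigma_{20}$ (where $\ell_1\ne\ell_2$), primality and $\mathcal{P}\cap\mathcal{L}=\emptyset$ force $\ell_2\mid r_2$, saving a factor of $L$ in the $r_2$-sum; writing $r_2=\ell_2 s$ with $|s|\ll R/L$, the reduced equation $s\ell_1 p_2=-r_1 p_1$ then determines $(p_1,r_1)$ up to $O(\log)$ choices (either $p_1$ is a prime divisor of $s\ne 0$, or $p_1=p_2$ in which case $r_1=-s\ell_1$). For the $n$-sum, the $\bmod\, p_1p_2$ constraint together with $nr_1\equiv\ell_1 p_2\bmod M$ pins $n$ modulo $p_1 p_2 M$ by CRT; since $|n|\ll p_1p_2 M/\mathcal{N}_0\ll NM^{-\gamma}\ll P^2M$ by \eqref{N0 bound} and $N<M^{1+\varepsilon}$, the $n$-sum is a singleton, and $n\not\equiv 0\bmod M$ is automatic since $\ell_1 p_2\bar r_1\not\equiv 0\bmod M$. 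Assembling, the inner bracket is
\begin{equation*}
\sum_{p_1,p_2}\frac{1}{(p_1p_2)^2}\sum_{r_1,r_2,n} M\ \ll\ \frac{M}{P^4}\cdot\frac{PR}{L}\ =\ \frac{MR}{P^3 L},
\end{equation*}
and squaring, then summing against $L^2\sum_{\ell_1\ne\ell_2}\ell_1^2\ell_2^2\asymp L^8$, and inserting $R\asymp PM/N$, yields $\Sigma_{20}\ll L^6M^4/(P^4N^2)$. For $\Delta_{20}$ with $\ell_1=\ell_2=\ell$, the structural equation collapses to $r_2 p_2=-r_1 p_1$, so $p_1\mid r_2$ (or $p_1=p_2$), giving an inner tuple count $O(RP)$ per $\ell$ and
\begin{equation*}
\Delta_{20}\ll \sum_{\ell}|\lambda_g(\ell)|^2\ell^2\cdot\frac{MRP}{P^4}\ll L^3\cdot\frac{MR}{P^3}=\frac{L^3M^2}{P^2N}.
\end{equation*}

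The main obstacle will be verifying cleanly that the reduction to the integer equality $r_2\ell_1 p_2=-r_1\ell_2 p_1$ goes through for every tuple (i.e.\ that the slack in \eqref{restriction 3} is strict enough to exclude nonzero multiples of $M$), and handling the degenerate case $p_1=p_2$ without losing a log. Once these are in place, the matching of $\Sigma_{20}$ and $\Delta_{20}$ with the B10 bounds is immediate: the extra pointwise mod-$M$ conditions on $r_i$ only further restrict $n$ to a specific nonzero residue modulo $M$, which costs nothing since $n$ was already a singleton.
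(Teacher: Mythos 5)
Your proof is correct and follows essentially the same route as the paper's: reduce the mod-$M$ constraints on $r_1,r_2$ to the structural equation $r_2\ell_1 p_2=-r_1\ell_2 p_1$ via size restrictions (the paper derives the stronger integer equalities $nr_1=\ell_1p_2$ and $nr_2=-\ell_2p_1$ directly from the combined congruences modulo $p_iM$, which also gives $n$ immediately rather than via CRT, but this is a cosmetic difference), then count tuples using divisor bounds and the $\ell_2\mid r_2$ divisibility. The only bookkeeping difference is that you keep $\ell_2$ as a free sum and save a factor $L$ from $\ell_2\mid r_2$ on both the primed and unprimed sides, whereas the paper uses the unprimed structural equation to determine $\ell_2$ and saves the factor $L$ from $\ell_2\mid r_2'$ on the primed side only; these allocations are equivalent and yield identical bounds.
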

\begin{proof}
The congruence conditions on $r_1, r_2$ and $n$ can be combined to write
\begin{equation*}
-\ell_1p_2+nr_1\equiv0\bmod p_1M \quad \text{ and } \quad \ell_2p_1+nr_2\equiv0\bmod p_2M.
\end{equation*}
By \eqref{new-n-sum}, we have $|nR|\ll N^{\varepsilon}PL<PM^{1-\varepsilon}$. The congruence conditions therefore give equalities 
\begin{equation*}
\begin{split}
&n r_1 = \ell_1 p_2 \quad \text{ and } \quad n r_2 = -\ell_2 p_1, \\
&n' r_1' = \ell_1 p_2' \quad \text{ and } \quad n' r_2' = -\ell_2 p_1'.
\end{split}
\end{equation*}
Note that $n=\ell_1p_2/r_1 = -\ell_2 p_1/r_2$ implies $\ell_1p_2r_2=-\ell_2p_1r_1$. Therefore fixing $\ell_1,p_2,r_2$ fixes $\ell_2,p_1,r_1$ up to factors of $\log M$. Similarly, $n'=\ell_1p_2'/r_1' = -\ell_2 p_1'/r_2'$, so that $\ell_1p_2'r_2'=-\ell_2p_1'r_1'$. If $\ell_1\neq\ell_2$, then $\ell_2|r_2'$. That saves a factor of $L$ in $r_2'$ sum. Moreover for fixed $\ell_1,p_2',r_2'$, there are only $\log M$ many $p_1', r_1'$. Finally the identities $n r_1 = \ell_1 p_2$ and $n' r_1' = \ell_1 p_2'$ fix $n$ and $n'$. 

In the case $\ell_1=\ell_2$, the previous identities become $r_2p_2=-r_1p_1$. Therefore fixing $r_2, p_2$ fixes $r_1, p_1$ up to factors of $\log M$. Finally the identity $n r_1 = \ell p_2$ fixes $n$. Therefore up to a factor of $(PML)^\varepsilon$,
\begin{equation*}
\begin{split}
\Sigma_{20}\ll L^5L^2\frac{1}{P^3}R\frac{1}{P^3}\frac{R}{L}M^2\ll \frac{L^6M^4}{P^4N^2}, \quad \text{ and } \quad \Delta_{20} \ll L^3\frac{1}{P^3}RM \ll \frac{L^3M^2}{P^2N}.
\end{split}
\end{equation*}

\end{proof}

\begin{Lemma}$\Sigma_{21}\ll (PML)^\varepsilon\frac{L^8M^5}{N^4P^4}\left(1+\frac{P^2M}{\CN_0}\right)^2$ and $\Delta_{21}\ll (PML)^\varepsilon\frac{L^3M^{5/2}}{N^2P^2}\left(1+\frac{P^2M}{\CN_0}\right)$. \label{B21}
\end{Lemma}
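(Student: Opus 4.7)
The plan is to adapt the proof of Lemma \ref{B11} in two ways. First, we are now in the case $n\not\equiv 0 \bmod M$ with the sub-case $r_1-\overline{n}\ell_1 p_2\equiv r_2+\overline{n}\ell_2 p_1\equiv 0\bmod M$ excluded, so Lemma \ref{squareroot 2} gives $|\mathfrak{C}|\ll M^{1/2}$ in place of the $O(1)$ bound used in Lemma \ref{B11}. Second, since we no longer impose $M\mid n$, the allowed $n$-range is the full interval $|n|\ll N^\varepsilon P^2 M/\CN_0$ rather than $|n|\ll N^\varepsilon P^2/\CN_0$, enlarging the admissible $n$-count by a factor of $M$. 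Together, these two modifications are precisely what accounts for the discrepancy between \eqref{sigma2 bound} and \eqref{sigma1 bound}.

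After pulling the character-sum bound $|\mathfrak{C}|\ll M^{1/2}$ out of the inner sum, the task reduces to counting quintuples $(p_1,p_2,r_1,r_2,n)$ subject to
\begin{equation*}
0\neq|r_i|\ll R=N^{\varepsilon}PM/N,\quad (r_i,p_i)=1,\quad |n|\ll N^{\varepsilon}P^2M/\CN_0,\quad M\nmid n,
\end{equation*}
together with the congruence $-\overline{r_1}\ell_1 p_2+\overline{r_2}\ell_2 p_1+n\equiv 0\bmod p_1 p_2$. As in the proof of Lemma \ref{B11}, I split into two sub-cases. If $p_1\neq p_2$, the primality and distinctness of the $p_i,\ell_i$ together with $(r_i,p_i)=1$ force $(n,p_1p_2)=1$, and the congruence then pins $r_1\bmod p_1$ and $r_2\bmod p_2$, saving a factor of $P$ in each of the two $r_i$-sums; the $n$-count is trivially $\ll 1+P^2 M/\CN_0$. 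If $p_1=p_2=p$, the congruence forces $p\mid n$, and after writing $n=pn_0$ the residual linear constraint modulo $p$ saves one factor of $P$; the range of $n_0$ is controlled via \eqref{restriction 4} and \eqref{N0 bound}, and this diagonal sub-case is dominated by the previous one.

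Combining these counts, for every fixed $(\ell_1,\ell_2)$ the inner bracket in the defining display of $\Sigma_{21}$ (and its analogue for $\Delta_{21}$) is bounded by
\begin{equation*}
(PML)^\varepsilon\frac{M^{1/2}}{P^2}\bigg(1+\frac{R}{P}\bigg)^{\!2}\bigg(1+\frac{P^2M}{\CN_0}\bigg)\ll (PML)^\varepsilon\frac{M^{5/2}}{N^2P^2}\bigg(1+\frac{P^2M}{\CN_0}\bigg),
\end{equation*}
using $R/P=N^\varepsilon M/N$. Squaring this uniform bound and summing over $\ell_1\neq \ell_2$ via $\sum_{\ell_1,\ell_2\in\mathcal{L}}\ell_1^2\ell_2^2\ll L^6$, then multiplying by the outer factor $L^2$, gives the claimed estimate for $\Sigma_{21}$. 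Keeping a single copy of the bound and summing with the weight $\sum_{\ell}|\lambda_g(\ell)|^2\ell^2\ll L^{3+\varepsilon}$ gives the claimed estimate for $\Delta_{21}$.

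The main technical point is the diagonal sub-case $p_1=p_2$: only one power of $P$ is saved there, so one must verify using \eqref{restriction 4} and \eqref{N0 bound} that the induced shrinkage in the $n$-range (via $p\mid n$) keeps this contribution subordinate to the off-diagonal count. Apart from that, the argument is straightforward bookkeeping parallel to the proof of Lemma \ref{B11}.
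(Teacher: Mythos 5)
Your argument is correct and follows essentially the same route as the paper's proof of Lemma \ref{B21}: the same split into $p_1\neq p_2$ and $p_1=p_2$, the same saving of $P$ in each $r_i$-sum (resp.\ in the $r_2$-sum together with $p\mid n$ on the diagonal), the same $M^{1/2}$ bound on $\mathfrak{C}$ and full $n$-count $\ll 1+P^2M/\CN_0$, assembled into the identical final bounds. (Your appeal to \eqref{restriction 4} in the diagonal sub-case is not actually needed: that sub-case is dominated simply because its $n$-count $1+PM/\CN_0$ is at most $1+P^2M/\CN_0$ while the $(p,r_1,r_2)$-count matches the off-diagonal one; this is harmless.)
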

\begin{proof}
When $p_1\neq p_2$, the congruence $-\overline{r_1}\ell_1 p_2+\overline{r_2}\ell_2p_1+n\equiv 0(p_1p_2)$ implies that $(n,p_1p_2)=1$. Moreover, for fixed $n$, $p_i$ and $\ell_i$, $i=1,2$,
\begin{equation*}
\begin{split}
r_1\equiv \overline{n}\ell_1p_2\bmod{p_1} \quad \text{ and } \quad r_2\equiv -\overline{n}\ell_2p_1\bmod{p_2}.
\end{split}
\end{equation*}
These congruence conditions save a factor of $P$ in each $r_i$-sum. In case $p_1=p_2=p$, the congruence condition shows $p|n$. Moreover, $-\overline{r_1}\ell_1+\overline{r_2}\ell_2+n/p \equiv 0 \bmod p$. Therefore fixing $r_1, \ell_1, \ell_2, n$ saves $P$ in $r_2$-sum. Similarly we get saving of $P$ for each of the $n'$ and $r_2'$ sums. Also, the exact same analysis as done for $p_i,r_i,n$-sums follows for the case $\ell_1=\ell_2$. Therefore up to a factor of $(PML)^\varepsilon$,
\begin{equation*}
\begin{split}
\Sigma_{21}&\ll  L^8\bigg[\frac{1}{P^2}\frac{R^2}{P^2}\left(1+\frac{P^2M}{\CN_0}\right)M^{1/2}\bigg]^2 + L^8\bigg[\frac{1}{P^2}\frac{R^2}{P^2}\left(1+\frac{PM}{\CN_0}\right)M^{1/2}\bigg]^2 \ll \frac{L^8M^5}{N^4P^4}\left(1+\frac{P^2M}{\CN_0}\right)^2, \\
\Delta_{21} &\ll  L^3\bigg[\frac{1}{P^2}\frac{R^2}{P^2}\left(1+\frac{P^2M}{\CN_0}\right)M^{1/2}\bigg] + L^3\bigg[\frac{1}{P^2}\frac{R^2}{P^2}\left(1+\frac{PM}{\CN_0}\right)M^{1/2}\bigg] \ll \frac{L^3M^{5/2}}{N^2P^2}\left(1+\frac{P^2M}{\CN_0}\right).
\end{split}
\end{equation*}
\end{proof}

\subsection*{Acknowledgments.}
We are very grateful to Paul D. Nelson for many valuable comments. We are also very thankful to the referee for his or her very careful reading and detailed comments of the manuscript. K. A. and Y. L. thank their advisor R. H. for introducing them into the project and explaining the idea.  Q. S. was partially supported by IRT16R43 and CSC.

\bibliographystyle{amsplain}
\bibliography{ref}
\end{document}